\theoremstyle{plain}
\newtheorem{theorem}{Theorem}[section]
\newtheorem{remark}{Remark}[section]
\newtheorem{lemma}{Lemma}[section]
\title[Complete $\lambda$-surfaces in $\mathbb R^3$]
{Complete $\lambda$-surfaces in $\mathbb R^3$}
\author [Q. -M. Cheng and G. Wei]{Qing-Ming Cheng  and Guoxin Wei}
\address{Qing-Ming Cheng \\  \newline \indent Department of Applied Mathematics, Faculty of Sciences,
\newline \indent Fukuoka University, Fukuoka  814-0180, Japan.  \newline \indent cheng@fukuoka-u.ac.jp}
\address{Guoxin Wei \\  School of Mathematical Sciences, South China Normal University,
\newline \indent 510631, Guangzhou,  China. \newline \indent  weiguoxin@tsinghua.org.cn}
\begin{document}
\maketitle

\begin{abstract}
The purpose of this paper is to study complete $\lambda$-surfaces in Euclidean space $\mathbb R^3$.
A  complete classification
for 2-dimensional complete   $\lambda$-surfaces in Euclidean space $\mathbb R^3$  with constant squared norm of the
second fundamental form is given.

\end{abstract}

\footnotetext{2010 \textit{Mathematics Subject Classification}:
53C44, 53C40.}
\footnotetext{{\it Key words and phrases}: mean curvature flow,
 self-shrinker, $\lambda$-surfaces,  the generalized maximum principle.}

\footnotetext{The first author was partially  supported by JSPS Grant-in-Aid for Scientific Research (B):  No.16H03937.
The second author was partly supported by grant No. 11771154 of NSFC.}

\section{introduction}
\vskip2mm
\noindent
\noindent
One of the most important problems in   mean curvature flow is to understand
the possible singularities that the flow goes through.  A key starting point
for singularity analysis is Huisken's monotonicity formula. The monotonicity
implies that the solution to the flow is asymptotically self-similar near a given type I singularity. Thus, it
 is modeled  by  self-shrinking solutions of the flow.
An  $n$-dimensional  submanifold  $X: M\rightarrow \mathbb{R}^{n+p}$  in the $(n+p)$-dimensional
Euclidean space $\mathbb{R}^{n+p}$  is called a self-shrinker if it satisfies
\begin{equation*}
\vec H+ X^{\perp}=0,
\end{equation*}
where  $X^{\perp}$  and $\vec H$ denote the normal part of the position vector $X$ and mean curvature vector of  this
submanifold.
It is known that self-shrinkers play an important role in the study on singularities  of the mean curvature flow because
they describe all possible  blow-ups at a given singularity.   \newline
For the classification of complete self-shrinkers with co-dimension $1$,  many nice works were done. Abresch and Langer \cite{AL}  classified
closed self-shrinkering curves in $\mathbb{R}^2$ and showed that the  round circle is the only embedded self-shrinker.
Huisken \cite{H2, H3},  Colding and Minicozzi \cite{CM}  classified $n$-dimensional  complete  embedded self-shrinkers
in $\mathbb{R}^{n+1}$ with  mean curvature $H\geq 0$ and  with polynomial volume growth.
According to the results  of Halldorsson \cite{H},
Ding and Xin \cite{DX1}, Cheng and Zhou \cite{CZ}, one knows
that for any positive integer $n$,
$\Gamma \times \mathbb R^{n-1}$ is  a complete self-shrinker without polynomial volume growth in $\mathbb R^{n+1}$, where
$\Gamma$ is a complete self-shrinking curve of Halldorsson  \cite{H}. Hence, the  condition of polynomial volume growth in \cite{H3} and \cite{CM}
is essential. Furthermore,  for the study on the rigidity of complete self-shrinkers, many important works have been done
(cf. \cite{CL},   \cite{CHW},  \cite{CO},  \cite{CP}, \cite{CW}, \cite{DX1}, \cite{DX2}, \cite{LW}, \cite{LW1}, \cite{LW2}  and so on). In particular, by estimating the first eigenvalue of the Dirichlet  eigenvalue problem,
Ding and Xin \cite{DX2}  studied 2-dimensional complete self-shrinkers with polynomial volume growth. They  proved
that a $2$-dimensional complete self-shrinker  $X: M\rightarrow \mathbb{R}^{3}$  with polynomial volume growth
and with constant  squared norm $S$ of the second fundamental form
is isometric to one of
 $\mathbb{R}^{2}$,
 $S^1 (1)\times \mathbb{R}$ and $S^{2}(\sqrt{2})$.
Cheng and Peng in \cite{CP}  have proved that for an $n$-dimensional complete self-shrinker
 $X:M^n\rightarrow \mathbb{R}^{n+1} $  with   $\inf H^2>0$,
if  the squared norm $S$ of the second fundamental form is constant, then $M^n$ is  isometric to either
$S^n(\sqrt{n})$ or $S^m(\sqrt m)\times\mathbb{R}^{n-m}$ in $\mathbb{R}^{n+1}$, $1\leq m\leq n-1$.

\noindent Recently,  Cheng and Ogata \cite{CO} have given a complete classification for 2-dimensional complete self-shrinkers with
 constant  squared norm $S$ of the second fundamental form, that is, they have proved the following:

\vskip3mm
\noindent
{\bf Theorem CO.}
{\it A $2$-dimensional complete self-shrinker  $X: M\rightarrow \mathbb{R}^{3}$  in $\mathbb{R}^{3}$ with constant  squared norm of the second fundamental form
is isometric to one of the following:
\begin{enumerate}
\item $\mathbb{R}^{2}$,
\item
 $S^1 (1)\times \mathbb{R}$,
\item  $S^{2}(\sqrt{2})$.
\end{enumerate}
 }

\vskip3mm
\noindent
On the other hand, from a view of variations,
self-shrinkers of mean curvature flow can be characterized as  critical points of the weighted area functional. In \cite{CW2},
the authors  gave  a definition of weighted volume and studied  the weighted area functional for variations preserving this volume.
Critical points for
the weighted area functional for variations preserving this volume  are called  $\lambda$-hypersurfaces  by the authors in \cite{CW2}.
Precisely,   an $n$-dimensional hypersurface  $X:M\to  \mathbb{R}^{n+1}$ in Euclidean space  $ \mathbb{R}^{n+1}$ is  called a $\lambda$-hypersurface  if
\begin{equation}
\langle X, N\rangle +H=\lambda,
\end{equation}
where $\lambda$ is a constant,  $H$ and $N$ denote the mean curvature and  unit normal vector of $X:M\to  \mathbb{R}^{n+1}$, respectively.

\begin{remark}
If $\lambda=0$, $\langle X, N\rangle +H=\lambda=0$, then $X:M\to  \mathbb{R}^{n+1}$ is a self-shrinkers. Hence,  the notation of
$\lambda$-hypersurfaces is a natural generalization of the self-shrinkers of the mean curvature flow.
\end{remark}

\vskip2mm
\noindent
It is well-known that there are three standard examples of $\lambda$-hypersurfaces in $\mathbb{R}^{n+1}$: the $n$-dimensional Euclidean space $\mathbb{R}^{n}$, the $n$-dimensional sphere $S^n(r)$ and the $n$-dimensional cylinder $S^k(r)\times \mathbb{R}^{n-k}$. In \cite{CW4}, Cheng and Wei have constructed compact rotational symmetric   $\lambda$-hypersurfaces.
Very recently, Ross \cite{R}, Li  and the second author \cite{LWe}  have constructed very interesting compact $\lambda$-hypersurfaces.
For recent years, the study on $\lambda$-hypersurfaces has attracted a lot of attention. For example,
in \cite{CW2}, Cheng and Wei have proved
that $S^k(r)\times \mathbb{R}^{n-k}$, $0\leq k\leq n$, are the only  complete embedded $\lambda$-hypersurfaces
with polynomial area growth in $\mathbb{R}^{n+1}$ if  $H-\lambda\geq 0$ and $\lambda(f_3(H-\lambda)-S)\geq0$,
where $f_3=\sum_{j=1}^n\lambda_j^3$, $\lambda_j$ is the principal curvature of this hypersurface. In \cite{CW3}, the authors have
studied the growth on upper and lower bounds of area for complete $\lambda$-hypersurfaces and stability for compact $\lambda$-hypersurfaces.
A number of nice works are about the study of rigidity for complete
$\lambda$-hypersurfaces under point-wise pinching conditions or global pinching conditions (\cite{COW}, \cite{CW4}, \cite{WP}, \cite{ZFC}, \cite{[G]}, \cite{[LXX]}, \cite{R}, \cite{[WXZ]}, \cite{[XLX]}).

\vskip2mm
\noindent
In this paper, by using of the generalized maximum principle, we give a complete classification for  2-dimensional complete  $\lambda$-surfaces in $\mathbb R^3$ with
constant  squared norm  of the second fundamental form. More precisely, we prove the following:
\begin{theorem}\label{theorem 1}
 Let $X: M^2\to \mathbb{R}^{3}$ be a
2-dimensional complete  $\lambda$-surface in $\mathbb R^3$.
If  the squared norm $S$ of the second fundamental form is constant, then
 either $S=0$, or  $S=\dfrac{2+\lambda^2+\lambda\sqrt{\lambda^2+4}}{2}$,  or $S=\dfrac{4+\lambda^2+\lambda\sqrt{\lambda^2+8}}{4}$ and  $X: M^2\to \mathbb{R}^{3}$ is
isometric to one of
\begin{enumerate}
\item $\mathbb R^2$,
\item $S^1(\frac{-\lambda+\sqrt{\lambda^2+4}}{2})\times \mathbb{R}$,
\item $S^2(\frac{-\lambda+\sqrt{\lambda^2+8}}{2})$.
\end{enumerate}

\end{theorem}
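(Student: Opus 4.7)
The plan is to show that the second fundamental form of $M$ must be parallel, $\nabla A\equiv 0$. Once this is in hand, the Lawson-type classification of surfaces with parallel second fundamental form in $\mathbb{R}^{3}$ forces $M$ to be (a complete copy of) a plane, a round cylinder, or a round sphere, and matching the free radius parameter against the defining equation $\langle X,N\rangle+H=\lambda$ then recovers the three stated values of $S$.

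The starting point is a pair of Simons-type identities for the drift operator $\mathcal{L}f:=\Delta f-\langle X,\nabla f\rangle$ on a $\lambda$-hypersurface. A direct Simons--Codazzi computation, using that $\langle X,N\rangle=\lambda-H$ implies $\nabla_i H=h_{ij}X^{j}$, gives
\[
\mathcal{L}H=(1-S)H+\lambda S,\qquad \tfrac{1}{2}\mathcal{L}S=|\nabla A|^{2}+S(1-S)+\lambda f_{3},
\]
where $f_{3}=\lambda_{1}^{3}+\lambda_{2}^{3}$. Since $S$ is constant, the second identity becomes the pointwise algebraic constraint $|\nabla A|^{2}=S^{2}-S-\lambda f_{3}$; in dimension two, Newton's identity $f_{3}=\tfrac12 H(3S-H^{2})$ makes this an explicit cubic in $H$.

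Next, because $H^{2}\le 2S$ by Cauchy--Schwarz, the mean curvature is globally bounded and the Gauss curvature $K=(H^{2}-S)/2$ is bounded. I would apply the generalized maximum principle for the drift operator $\mathcal{L}$ on complete $\lambda$-surfaces (in the spirit of Cheng--Ogata) to both $H$ and $-H$: from $\mathcal{L}H=(1-S)H+\lambda S$ one obtains $(1-S)\sup H+\lambda S\le 0$ and $(1-S)\inf H+\lambda S\ge 0$, which immediately force $H$ to be constant when $S<1$. When $S\ge 1$ the coefficient on $H$ in $\mathcal{L}H$ has the unfavorable sign, and the direct maximum principle on $H$ only locates $\sup H$ and $\inf H$ on opposite sides of $H_{0}:=\lambda S/(S-1)$. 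To close the argument in this range, I would test $\mathcal{L}$ against an auxiliary function combining $(H-H_{0})^{2}$ with the cubic constraint $|\nabla A|^{2}=\psi(H)\ge 0$; this constraint confines $H$ to a compact subinterval of the positivity set of $\psi$, and the combined information should push the maximum-principle inequality to equality, yielding $|\nabla H|\equiv 0$ and hence $\nabla A\equiv 0$.

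Once $H$ (and $S$) are constant, differentiating $\langle X,N\rangle+H=\lambda$ gives $h_{ij}X^{j}=0$, and in a principal frame this splits into three mutually exclusive cases. If both principal curvatures vanish, $M$ is totally geodesic, hence a plane with $S=0$. If exactly one vanishes, the zero-curvature direction is parallel and $M$ splits isometrically as $\Sigma\times\mathbb{R}$, where $\Sigma$ is a circle of radius satisfying $r^{2}+\lambda r-1=0$, giving $r=\tfrac12(-\lambda+\sqrt{\lambda^{2}+4})$ and $S=1/r^{2}$. If neither vanishes, then $X^{\perp}=(\lambda-H)N$ combined with $\partial_{i}X=e_{i}$ forces $h_{ij}=-\delta_{ij}/(\lambda-H)$, identifying $M$ as a sphere of radius satisfying $r^{2}+\lambda r-2=0$, so $r=\tfrac12(-\lambda+\sqrt{\lambda^{2}+8})$ and $S=2/r^{2}$. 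Rationalizing these expressions for $S$ produces the closed forms in the theorem. The main obstacle, as indicated, is the constancy of $H$ in the regime $S\ge 1$: the linear maximum-principle inequality coming from $\mathcal{L}H$ alone has the wrong sign, and one must couple the $\mathcal{L}H$ and $\mathcal{L}S$ identities through the cubic pointwise constraint and invoke the generalized maximum principle for $\mathcal{L}$ on complete $\lambda$-hypersurfaces carefully to finish.
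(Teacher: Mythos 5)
Your reduction of the theorem to the constancy of $H$ is the right strategy, and your endgame (splitting into the cases $\lambda_1=\lambda_2=0$, exactly one principal curvature zero, or $h$ invertible, then solving $r^2+\lambda r-k=0$) correctly recovers the three surfaces and the stated values of $S$. The identities you start from, $\mathcal{L}H=(1-S)H+\lambda S$ and $\tfrac12\mathcal{L}S=|\nabla A|^2+S(1-S)+\lambda f_3$, and the observation $H^2\le 2S$ are exactly the paper's Lemma 2.1 and its first step; your remark that the generalized maximum principle applied to $\pm H$ settles the case $S<1$ is correct. But the entire difficulty of the theorem lives in the regime $S\ge 1$ (which contains all solutions with $\lambda\ge 0$), and there your proposal stops at a hope: you say the pointwise constraint $|\nabla A|^2=S(S-1)-\tfrac{\lambda}{2}H(3S-H^2)\ge 0$ combined with some auxiliary function of $(H-H_0)^2$ ``should push the maximum-principle inequality to equality.'' No such first-order or second-order argument is exhibited, and it is not clear one exists: the constraint only confines $H$ to an interval and does not by itself force $|\nabla H|\equiv 0$.

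What the paper actually does at this point is substantially heavier. It applies the generalized maximum principle to $H^2$ and to $-H^2$, extracts along the extremal sequences convergent limits $\bar h_{ij},\bar h_{ijk},\bar h_{ijkl}$ (boundedness of the third and fourth derivatives coming from the Ricci identities and the constancy of $S$), and then exploits three sources of linear and quadratic relations among these limits: $\nabla H\to 0$, the first and second derivatives of the identity $\sum_{i,j}h_{ij}h_{ijk}=0$, and the derivatives of the Simons identity \eqref{eq:18-3}. Crucially, it also needs the second Simons-type formula of Lemma 2.2 for $\mathcal{L}\sum_{i,j,k}h_{ijk}^2$, evaluated two ways, to eliminate the spurious branch $9\lambda^2 S=2(1+S)^2$ and to rule out $\bar\lambda_1\ne\bar\lambda_2$ at the extremal points; a separate argument (Theorem 3.3 of the paper) is needed to show $H$ never vanishes so that $\inf H^2>0$. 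Only after all of this does one get $\sup H^2=\inf H^2$ and hence $H$ constant. None of this machinery appears in your proposal, so as written it has a genuine gap precisely at the step you yourself identify as the main obstacle.
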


\vskip5mm
\section {Preliminaries}
\vskip2mm

\noindent
Let $X: M\rightarrow\mathbb{R}^{n+1}$ be an
$n$-dimensional connected hypersurface of the $n+1$-dimensional Euclidean space
$\mathbb{R}^{n+1}$. We choose a local orthonormal frame field
$\{e_A\}_{A=1}^{n+1}$ in $\mathbb{R}^{n+1}$ with dual coframe field
$\{\omega_A\}_{A=1}^{n+1}$, such that, restricted to $M$,
$e_1,\cdots, e_n$ are tangent to $M^n$.

\noindent
From now on,  we use the following conventions on the ranges of indices:
$$
 1\leq i,j,k,l\leq n
$$
and $\sum_{i}$ means taking  summation from $1$ to $n$ for $i$.
Then we have
\begin{equation*}
dX=\sum_i\limits \omega_i e_i,
\end{equation*}
\begin{equation*}
de_i=\sum_j\limits \omega_{ij}e_j+\omega_{i n+1}e_{n+1},
\end{equation*}
\begin{equation*}
de_{n+1}=\omega_{n+1 i}e_i,
\end{equation*}
where $\omega_{ij}$ is the Levi-Civita connection of the hypersurface.

\noindent By  restricting  these forms to $M$,  we get
\begin{equation}\label{2.1-1}
\omega_{n+1}=0
\end{equation}
and the induced Riemannian metric of the hypersurface  is written as
$ds^2_M=\sum_i\limits\omega^2_i$.
Taking exterior derivatives of \eqref{2.1-1}, we obtain
\begin{equation*}
0=d\omega_{n+1}=\sum_i \omega_{n+1 i}\wedge\omega_i.
\end{equation*}
By Cartan's lemma, we know
\begin{equation*}\label{2.1-2}
\omega_{in+1}=\sum_j h_{ij}\omega_j,\quad
h_{ij}=h_{ji}.
\end{equation*}

$$
h=\sum_{i,j}h_{ij}\omega_i\otimes\omega_j
$$
and
$$
H= \sum_i\limits h_{ii}
$$
are called  the second fundamental form and the mean curvature  of $X: M\rightarrow\mathbb{R}^{n+1}$, respectively.
Let $S=\sum_{i,j}\limits (h_{ij})^2$ be  the squared norm
of the second fundamental form  of $X: M\rightarrow\mathbb{R}^{n+1}$.
The induced structure equations of $M$ are given by
\begin{equation*}
d\omega_{i}=\sum_j \omega_{ij}\wedge\omega_j, \quad  \omega_{ij}=-\omega_{ji},
\end{equation*}
\begin{equation*}
d\omega_{ij}=\sum_k \omega_{ik}\wedge\omega_{kj}-\frac12\sum_{k,l}
R_{ijkl} \omega_{k}\wedge\omega_{l},
\end{equation*}
where $R_{ijkl}$ denotes components of the curvature tensor of the hypersurface.
Hence,
the Gauss equations are given by
\begin{equation}\label{eq:2.1-3}
R_{ijkl}=h_{ik}h_{jl}-h_{il}h_{jk}.
\end{equation}

\noindent
Defining the
covariant derivative of $h_{ij}$ by
\begin{equation}\label{2.1-6}
\sum_{k}h_{ijk}\omega_k=dh_{ij}+\sum_kh_{ik}\omega_{kj}
+\sum_k h_{kj}\omega_{ki},
\end{equation}
we obtain the Codazzi equations
\begin{equation}\label{eq:16-5}
h_{ijk}=h_{ikj}.
\end{equation}
By taking exterior differentiation of \eqref{2.1-6}, and
defining
\begin{equation}\label{eq:16-3}
\sum_lh_{ijkl}\omega_l=dh_{ijk}+\sum_lh_{ljk}\omega_{li}
+\sum_lh_{ilk}\omega_{lj}+\sum_l h_{ijl}\omega_{lk},
\end{equation}
we have the following Ricci identities:
\begin{equation}\label{eq:6-2-6}
h_{ijkl}-h_{ijlk}=\sum_m
h_{mj}R_{mikl}+\sum_m h_{im}R_{mjkl}.
\end{equation}
Defining
\begin{equation}\label{eq:16-6}
\begin{aligned}
\sum_mh_{ijklm}\omega_m&=dh_{ijkl}+\sum_mh_{mjkl}\omega_{mi}
+\sum_mh_{imkl}\omega_{mj}+\sum_mh_{ijml}\omega_{mk}\\
&\ \ +\sum_mh_{ijkm}\omega_{ml}
\end{aligned}
\end{equation}
and taking exterior differentiation of  \eqref{eq:16-3}, we get
\begin{equation}\label{eq:16-2-8}
\begin{aligned}
h_{ijkln}-h_{ijknl}&=\sum_{m} h_{mjk}R_{miln}
+ \sum_{m}h_{imk}R_{mjln}+ \sum_{m}h_{ijm}R_{mkln}.
\end{aligned}
\end{equation}
For a smooth function $f$, we define
\begin{equation}\label{2.1-14}
\sum_i f_{,i}\omega_i=df,
\end{equation}
\begin{equation}\label{2.1-15}
\sum_j f_{,ij}\omega_j=df_{,i}+\sum_j
f_{,j}\omega_{ji},
\end{equation}
\begin{equation}\label{2.1-15}
|\nabla f|^2=\sum_{i }(f_{,i})^2,\ \ \ \  \Delta f =\sum_i f_{,ii}.
\end{equation}
The $\mathcal{L}$-operator is defined by
\begin{equation}
\mathcal{L}f=\Delta f-\langle X,\nabla f\rangle,
\end{equation}
where $\Delta$ and $\nabla$ denote the Laplacian and the gradient
operator, respectively.
\vskip2mm
\noindent
Formulas in the following Lemma 2.1 can be found in  \cite{CW2}.
\begin{lemma}
Let $X:M^n\rightarrow \mathbb{R}^{n+1}$ be an $n$-dimensional $\lambda$-hypersurface in $\mathbb R^{n+1}$. We have
\begin{equation}\label{eq:16-13}
\mathcal{L}H=H
+S(\lambda-H).
\end{equation}
\begin{equation}
\aligned
\frac{1}{2}\mathcal{L}
|X|^{2}=n-|X|^{2}+\lambda (\lambda-H).
\endaligned
\end{equation}
\begin{equation}\label{eq:18-3}
\frac{1}{2}\mathcal{L}S
=\sum_{i,j,k}h_{ijk}^2+(1-S)S+\lambda f_3,
\end{equation}
\begin{equation}
\frac{1}{2}\mathcal{L}H^{2}=|\nabla H|^2+H^2+S(\lambda-H)H.
\end{equation}
\end{lemma}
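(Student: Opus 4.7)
My plan is to derive all four identities as pointwise tensor computations flowing from the defining relation $\langle X,N\rangle+H=\lambda$. The key preparation is to differentiate the $\lambda$-hypersurface equation: since $N_{,i}=-\sum_j h_{ij}e_j$, differentiating $\lambda-H=\langle X,N\rangle$ tangentially gives $H_{,i}=\sum_j h_{ij}\langle X,e_j\rangle$. Writing $u_j:=\langle X,e_j\rangle$ for the components of the tangential part $X^T$, a direct calculation using $X_{,k}=e_k$ together with $(e_j)_{,k}=\sum_l\omega_{jl}(e_k)e_l+h_{jk}N$ yields the companion identity
$$
u_{j,k}=\delta_{jk}+h_{jk}(\lambda-H).
$$
These two formulas drive everything that follows.

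For $\tfrac12\mathcal{L}|X|^2$, I would note $(|X|^2)_{,i}=2u_i$ and $(|X|^2)_{,ii}=2u_{i,i}=2+2h_{ii}(\lambda-H)$, so tracing gives $\Delta|X|^2=2n+2H(\lambda-H)$, while $\langle X,\nabla|X|^2\rangle=2|X^T|^2=2|X|^2-2(\lambda-H)^2$; combining and halving yields the stated formula. For $\mathcal{L}H$, one more covariant derivative of $H_{,i}=\sum_j h_{ij}u_j$, combined with the Codazzi consequence $\sum_i h_{iji}=\sum_i h_{iij}=H_{,j}$, produces $\Delta H=\langle X,\nabla H\rangle+H+S(\lambda-H)$, so $\mathcal{L}H=H+S(\lambda-H)$. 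The formula for $\tfrac12\mathcal{L}H^2$ is then immediate via $\mathcal{L}H^2=2H\,\mathcal{L}H+2|\nabla H|^2$.

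The main work is $\tfrac12\mathcal{L}S$. Here I would invoke the Simons-type identity for hypersurfaces in $\mathbb{R}^{n+1}$,
$$
\Delta h_{ij}=H_{,ij}+H\sum_k h_{ik}h_{kj}-Sh_{ij},
$$
which follows from Codazzi \eqref{eq:16-5} together with the Ricci identity \eqref{eq:6-2-6} and the Gauss equation \eqref{eq:2.1-3}. Differentiating $H_{,i}=\sum_j h_{ij}u_j$ once more gives
$$
H_{,ij}=\sum_k h_{ijk}u_k+h_{ij}+(\lambda-H)\sum_k h_{ik}h_{kj},
$$
so contracting with $h_{ij}$ yields $\sum_{i,j}h_{ij}H_{,ij}=\tfrac12\langle X,\nabla S\rangle+S+(\lambda-H)f_3$. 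Substituting into $\Delta S=2|\nabla h|^2+2\sum_{i,j}h_{ij}\Delta h_{ij}$, the $Hf_3$ contribution from Simons and the $(\lambda-H)f_3$ contribution above collapse cleanly into a single $\lambda f_3$, and subtracting $\langle X,\nabla S\rangle$ to form $\mathcal{L}S$ recovers the identity. The main obstacle, and the one place where the $\lambda$-hypersurface structure (rather than generic hypersurface calculus) enters nontrivially, is exactly this $Hf_3+(\lambda-H)f_3\to\lambda f_3$ cancellation; it is easy to mismanage a factor or a Codazzi symmetry while expanding $\sum_{i,j}h_{ij}H_{,ij}$, so I would spell that step out carefully.
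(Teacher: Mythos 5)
Your computation is correct and is the standard derivation of these identities; the paper itself offers no proof, only a citation to [CW2], and your argument (differentiating $\langle X,N\rangle+H=\lambda$ to get $H_{,i}=\sum_j h_{ij}\langle X,e_j\rangle$ and $u_{j,k}=\delta_{jk}+h_{jk}(\lambda-H)$, then combining with the Simons identity) is exactly the route taken there. All four identities check out, including the cancellation $Hf_3+(\lambda-H)f_3=\lambda f_3$ in the formula for $\tfrac12\mathcal{L}S$.
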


\noindent
\begin{lemma}
Let $X:M^2\rightarrow \mathbb{R}^{3}$ be  a $2$-dimensional $\lambda$-surface in $\mathbb R^{3}$. If $S$ is constant,  we have
\begin{equation}\label{eq:6-2-17}
\aligned
\frac{1}{2}\mathcal{L}\sum_{i, j,k}(h_{ijk})^{2}
=&\sum_{i,j,k,l}(h_{ijkl})^{2}+(2-S)\sum_{i,j,k}(h_{ijk})^{2}+6\sum_{i,j,k,l,p}h_{ijk}h_{il}h_{jp}h_{klp}\\
&-3\sum_{i,j,k,l,p}h_{ijk}h_{ijl}h_{kp}h_{lp}+3\lambda \sum_{i,j,k,l}h_{ijk}h_{ijl}h_{kl}
\endaligned
\end{equation}

and
\begin{equation}\label{eq:18-6-15-1}
\aligned
&\frac{1}{2}\mathcal{L}\sum_{i,j,k}(h_{ijk})^{2}\\
=&\dfrac{3}{2}\lambda H|\nabla H|^{2}+\frac{3}{4}\lambda H^3+\frac{3}{4}\lambda H^2 S(\lambda-H)
-\dfrac{3}{4}\lambda S H-\dfrac{3}{4}\lambda S^2 (\lambda-H).
\endaligned
\end{equation}
\end{lemma}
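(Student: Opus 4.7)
The plan is to establish both identities by direct computation. For \eqref{eq:6-2-17}, I would begin from the Leibniz identity
\[
\tfrac{1}{2}\mathcal{L}\sum_{i,j,k}(h_{ijk})^{2}=\sum_{i,j,k,l}(h_{ijkl})^{2}+\sum_{i,j,k}h_{ijk}\,(\mathcal{L}h)_{ijk},
\]
where $(\mathcal{L}h)_{ijk}:=\sum_{l}h_{ijkll}-\sum_{l}h_{ijkl}\langle X,e_{l}\rangle$ is the covariant $\mathcal{L}$-Laplacian of the tensor $\nabla h$; the whole content lies in a Simons-type identity for $(\mathcal{L}h)_{ijk}$. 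As a preliminary step I would derive the Simons-type identity for $h_{ij}$ on a $\lambda$-hypersurface, namely $\mathcal{L}h_{ij}=(1-S)h_{ij}+\lambda\sum_{l}h_{il}h_{lj}$. Codazzi, the Ricci identity \eqref{eq:6-2-6}, and the Gauss equation \eqref{eq:2.1-3} give the usual Simons relation $\Delta h_{ij}=H_{,ij}+H(h^{2})_{ij}-Sh_{ij}$; from the $\lambda$-hypersurface equation $\langle X,N\rangle+H=\lambda$ one computes $H_{,i}=\sum_{l}h_{il}\langle X,e_{l}\rangle$ and $\nabla_{j}\langle X,e_{l}\rangle=\delta_{jl}+(\lambda-H)h_{jl}$, whence $H_{,ij}=\sum_{l}h_{ijl}\langle X,e_{l}\rangle+h_{ij}+(\lambda-H)(h^{2})_{ij}$, and combining yields the Simons identity (and recovers \eqref{eq:16-13} on tracing).

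Next, since $S$ is constant, differentiation in $e_{k}$ gives $\nabla_{k}\mathcal{L}h_{ij}=(1-S)h_{ijk}+\lambda\sum_{l}(h_{ikl}h_{lj}+h_{il}h_{jkl})$. To convert this into a formula for $(\mathcal{L}h)_{ijk}$ I would expand the commutator $[\nabla_{k},\mathcal{L}]h_{ij}=[\nabla_{k},\Delta]h_{ij}-[\nabla_{k},\langle X,\nabla\rangle]h_{ij}$. The second piece is quick: the formula for $\nabla_{k}\langle X,e_{l}\rangle$ contributes an extra $h_{ijk}$ term (raising the coefficient from $1-S$ to $2-S$) together with a cubic piece $(\lambda-H)\sum_{l}h_{ijl}h_{kl}$. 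The first piece is the core computation: iterating the Ricci identities \eqref{eq:6-2-6} and \eqref{eq:16-2-8} on the fourth covariant derivative of $h$ and then using the 2D Gauss equation $R_{ijkl}=h_{ik}h_{jl}-h_{il}h_{jk}$ collapses all curvature operators into polynomial expressions in $h$ and $\nabla h$. Contracting the resulting $(\mathcal{L}h)_{ijk}$ against $h_{ijk}$ and invoking the full symmetry of $h_{ijk}$ (Codazzi combined with symmetry of $h_{ij}$) produces precisely the coefficients $(2-S)$, $6$, $-3$, and $3\lambda$ appearing on the right-hand side of \eqref{eq:6-2-17}.

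The identity \eqref{eq:18-6-15-1} is then an almost immediate algebraic consequence. Because $\mathcal{L}S=0$, formula \eqref{eq:18-3} rearranges to $\sum_{i,j,k}(h_{ijk})^{2}=S(S-1)-\lambda f_{3}$, so $\tfrac{1}{2}\mathcal{L}\sum(h_{ijk})^{2}=-\tfrac{\lambda}{2}\mathcal{L}f_{3}$. In dimension two, Newton's identity $f_{3}=H^{3}-3H\det(h)$ together with $\det(h)=(H^{2}-S)/2$ gives $f_{3}=\tfrac{1}{2}(3HS-H^{3})$; with $S$ constant this reduces $\mathcal{L}f_{3}$ to $\tfrac{3S}{2}\mathcal{L}H-\tfrac{1}{2}\mathcal{L}H^{3}$, and combining \eqref{eq:16-13} with the chain-rule expression $\mathcal{L}H^{3}=3H^{2}\mathcal{L}H+6H|\nabla H|^{2}$ yields \eqref{eq:18-6-15-1} after straightforward simplification. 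I expect the main obstacle to be the bookkeeping behind \eqref{eq:6-2-17}: iterated Ricci identities on fourth-order covariant derivatives of $h$ generate a long list of curvature terms that must be expanded via the Gauss equation and reorganised, exploiting the full symmetry of $h_{ijk}$, into exactly the three cubic/quartic contractions with the stated numerical coefficients.
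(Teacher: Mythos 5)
Your proposal is correct and follows essentially the same route as the paper, whose proof consists only of the remark that \eqref{eq:6-2-17} follows from the Ricci identities \eqref{eq:6-2-6}, \eqref{eq:16-2-8} by direct calculation and that \eqref{eq:18-6-15-1} follows from \eqref{eq:18-3} together with $f_3=\tfrac{H(3S-H^2)}{2}$. Your reduction $\tfrac12\mathcal{L}\sum(h_{ijk})^2=-\tfrac{\lambda}{2}\mathcal{L}f_3$ with $\mathcal{L}f_3=\tfrac{3S}{2}\mathcal{L}H-\tfrac12\mathcal{L}H^3$ and $\mathcal{L}H^3=3H^2\mathcal{L}H+6H|\nabla H|^2$ reproduces \eqref{eq:18-6-15-1} exactly, and your Simons-type identity $\mathcal{L}h_{ij}=(1-S)h_{ij}+\lambda\sum_l h_{il}h_{lj}$ plus commutation of $\nabla_k$ with $\mathcal{L}$ is the intended (if laborious) derivation of \eqref{eq:6-2-17}.
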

\begin{proof} By making use of the Ricci identities  \eqref{eq:6-2-6},  \eqref{eq:16-2-8} and a direct calculation, we
can  obtain \eqref{eq:6-2-17}. From  the formula \eqref{eq:18-3} in  Lemma 2.1 and  $f_3=\frac{H(3S-H^2)}{2}$, we can prove
\eqref{eq:18-6-15-1}.
\end{proof}

 \vskip10mm
\section{Proofs of the main results}

\vskip5mm
\noindent
If $\lambda=0$, we know that $X:M^2\rightarrow \mathbb{R}^{3}$ is a self-shrinker. From Theorem CO in Section one,
we know that our results are proved. Hence, we only consider the case $\lambda\neq 0$ in this section.
\vskip2mm
\noindent
In order to  prove  our results,  the following
generalized maximum principle for $\mathcal{L}$-operator on $\lambda$-hypersurfaces will play an important role,  which was proved
by Cheng, Ogata and Wei in \cite{COW}:
\begin{lemma} {\rm(}Generalized maximum principle for $\mathcal{L}$-operator {\rm)}
Let $X: M^n\to \mathbb{R}^{n+1}$   be an $n$-dimensional  complete $\lambda$-hypersurface  with Ricci
curvature bounded from below. Let $f$ be any $C^2$-function bounded
from above on this $\lambda$-hypersurface. Then, there exists a sequence of points
$\{p_m\}\subset M^n$, such that
\begin{equation*}
\lim_{m\rightarrow\infty} f(X(p_m))=\sup f,\quad
\lim_{m\rightarrow\infty} |\nabla f|(X(p_m))=0,\quad
\limsup_{m\rightarrow\infty}\mathcal{L} f(X(p_m))\leq 0.
\end{equation*}
\end{lemma}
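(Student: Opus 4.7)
My plan is to bootstrap from the classical Omori--Yau maximum principle for the Laplacian, which applies here since the Ricci curvature of $M^{n}$ is bounded from below. Classical Omori--Yau produces, for any $C^{2}$ function bounded above, a sequence along which the function approaches its supremum, the gradient goes to zero, and the Laplacian has nonpositive $\limsup$. To get the statement for $\mathcal{L}$, I need to arrange that the drift term $\langle X, \nabla f\rangle$ also tends to zero along some such sequence, since then $\limsup\mathcal{L}f = \limsup\Delta f - \lim\langle X, \nabla f\rangle \le 0$.

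The crucial structural input comes from Lemma~2.1 combined with the defining equation $\langle X, N\rangle = \lambda - H$ of a $\lambda$-hypersurface: together they give
\begin{equation*}
\tfrac12\mathcal{L}|X|^{2} \;=\; n - |X|^{2} + \lambda\langle X, N\rangle \;\le\; n + \tfrac{\lambda^{2}}{4},
\end{equation*}
after using $|\langle X, N\rangle|\le |X|$ and completing the square. Hence $|X|^{2}$ is a smooth function on $M$ whose image under $\mathcal{L}$ is uniformly bounded above --- exactly the ``penalty-function'' ingredient needed to promote Omori--Yau from $\Delta$ to $\mathcal{L}$. For each $\epsilon > 0$, I would apply the classical Omori--Yau principle to $f_{\epsilon} := f - \epsilon|X|^{2}$, which is $C^{2}$ and still bounded above: this yields $\{p_{m}^{(\epsilon)}\}$ with $f_{\epsilon}(p_{m}^{(\epsilon)})\to \sup f_{\epsilon}$, $|\nabla f_{\epsilon}|(p_{m}^{(\epsilon)})\to 0$, and $\limsup_{m}\Delta f_{\epsilon}(p_{m}^{(\epsilon)})\le 0$. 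Since $f$ is bounded above and $f_{\epsilon}(p_{m}^{(\epsilon)})$ is eventually close to $\sup f_{\epsilon}$, the penalty forces $\epsilon|X(p_{m}^{(\epsilon)})|^{2}$ to stay bounded in $m$ (for fixed $\epsilon$); in particular $|X(p_{m}^{(\epsilon)})|$ is bounded, which forces $\langle X, \nabla f_{\epsilon}\rangle(p_{m}^{(\epsilon)})\to 0$ and hence $\limsup_{m}\mathcal{L}f_{\epsilon}(p_{m}^{(\epsilon)})\le 0$. Using $\mathcal{L}f = \mathcal{L}f_{\epsilon} + \epsilon\,\mathcal{L}|X|^{2}$ and the bound above, this translates to $\limsup_{m}\mathcal{L}f(p_{m}^{(\epsilon)}) \le 2\epsilon(n+\lambda^{2}/4)$, with analogous $O(\epsilon)$ corrections in the other two Omori--Yau conclusions.

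A diagonal extraction $\epsilon_{k}\downarrow 0$ with $m = m(k)$ sufficiently large should then deliver the desired sequence. I expect the main obstacle to be verifying that the diagonal sequence genuinely satisfies $f(p_{m(k)}^{(\epsilon_{k})})\to\sup f$, which in turn needs $\sup f_{\epsilon}\to \sup f$ as $\epsilon\downarrow 0$. This is automatic whenever $\sup f$ can be approached by points with bounded $|X|$, but not so clear if every maximising sequence escapes to infinity in $|X|$. In that delicate case, one must control $|X|$ along maximising sequences by intrinsic data --- the inequality $|X(p)|\le |X(o)| + d(p,o)$ and the Bishop--Gromov-type comparison estimates flowing from Ricci bounded below --- and balance the rate $\epsilon_{k}\downarrow 0$ against the allowable growth of $|X|$ so that the diagonal sequence recovers the full supremum of $f$. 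That rate-balancing, interlocked with the gradient decay rate supplied by Omori--Yau, is the technical heart of the argument.
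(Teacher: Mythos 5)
The paper does not actually prove this lemma: it is quoted from Cheng--Ogata--Wei \cite{COW}, and the argument there is essentially the penalization scheme you propose (apply the classical Omori--Yau principle, valid since the Ricci curvature is bounded from below, to $f-\epsilon|X|^{2}$, and use $\tfrac12\mathcal{L}|X|^{2}=n-|X|^{2}+\lambda\langle X,N\rangle\le n-\langle X,N\rangle^{2}+\lambda\langle X,N\rangle\le n+\lambda^{2}/4$ to absorb the drift term). Your outline is correct, but the step you single out as ``the technical heart'' is in fact immediate and needs none of the machinery you invoke: for every \emph{fixed} $p\in M$ one has $\sup f_{\epsilon}\ge f_{\epsilon}(p)=f(p)-\epsilon|X(p)|^{2}\to f(p)$ as $\epsilon\downarrow 0$, hence $\liminf_{\epsilon\downarrow 0}\sup f_{\epsilon}\ge\sup f$, and since trivially $\sup f_{\epsilon}\le\sup f$, we get $\sup f_{\epsilon}\to\sup f$ unconditionally --- no discussion of maximizing sequences escaping to infinity, no Bishop--Gromov comparison, and no rate-balancing are required. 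With that one-line observation the rest closes exactly as you describe: for fixed $\epsilon\le 1$ the penalty forces $\epsilon|X(p^{(\epsilon)}_{m})|^{2}\le\sup f-\sup f_{\epsilon}+1\le C$ for $m$ large with $C$ independent of $\epsilon$, so $|X|$ is bounded along the sequence and $\langle X,\nabla f_{\epsilon}\rangle(p^{(\epsilon)}_{m})\to 0$, giving $\limsup_{m}\mathcal{L}f(p^{(\epsilon)}_{m})\le 2\epsilon(n+\lambda^{2}/4)$ and $\limsup_{m}|\nabla f|(p^{(\epsilon)}_{m})\le 2\sqrt{\epsilon C}$; a diagonal choice $\epsilon_{k}\downarrow 0$, $m=m(k)$ then yields the stated sequence. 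In short, the proposal is sound and matches the cited proof; only its final paragraph overestimates the difficulty of a trivial verification.
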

\noindent

\noindent
First of all, we prove the following:

\begin{theorem}\label{theorem 3}
For   a $2$-dimensional complete $\lambda$-surface $X:M^2\rightarrow \mathbb{R}^{3}$  with constant squared norm $S$ of the second fundamental form, we have either
\begin{enumerate}
\item $\lambda^2 S=(S-1)^2$ and $\sup H^2=S$, or
\item $\lambda^2S=2(S-1)^2$ and $\sup H^2=2S$, or
\item
$\lambda^2S=\dfrac{2(1+S)^2}{9}$ and $\sup H^2=2S$.
\end{enumerate}
\end{theorem}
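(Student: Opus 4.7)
The key preparatory remark is that for a $2$-surface, $S=\lambda_1^2+\lambda_2^2\ge \tfrac12(\lambda_1+\lambda_2)^2 = \tfrac12 H^2$, so $H^2\le 2S$; thus $H$ is bounded and the Gauss curvature $K = \tfrac12(H^2-S)$ is bounded below by $-S/2$. In dimension two the Ricci curvature coincides with $K$, so Lemma 3.1 (the generalized maximum principle for $\mathcal{L}$) applies to every bounded $C^2$ function on $M^2$. Throughout I assume $\lambda \ne 0$ (Theorem CO handles $\lambda = 0$) and $S > 0$ (for $S = 0$ the surface is a plane, which yields none of the three listed relations).

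Applying Lemma 3.1 to $H$ and to $-H$, and using $\mathcal{L}H = H(1-S) + S\lambda$ from \eqref{eq:16-13}, gives
\[
H_+(1-S) + S\lambda \le 0, \qquad H_-(1-S) + S\lambda \ge 0,
\]
where $H_+ = \sup H$ and $H_- = \inf H$. Subtracting shows that $S = 1$ forces $\lambda = 0$ (contradiction), while $S < 1$ sandwiches $H_+ = H_- = S\lambda/(S-1)$, so $H$ is constant. Together with the constancy of $S$, this makes both principal curvatures constant, so $M^2$ is isoparametric in $\mathbb R^3$: a plane, cylinder, or sphere. The plane is excluded by $S > 0$, while $S^1(r)\times\mathbb R$ and $S^2(R)$ give, respectively, case (1) ($\sup H^2 = S$, $\lambda^2 S = (S-1)^2$) and case (2) ($\sup H^2 = 2S$, $\lambda^2 S = 2(S-1)^2$), as one verifies by substituting the radii into $\langle X, N\rangle + H = \lambda$.

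The remaining case is $S > 1$, where $H$ need not be constant. Combining \eqref{eq:18-3} with $\mathcal{L}S = 0$ and the $n=2$ identity $f_3 = H(3S - H^2)/2$ yields the pointwise identity
\[
\phi := \sum_{i,j,k} h_{ijk}^2 = S(S-1) - \tfrac12 \lambda H(3S - H^2) =: g(H) \ge 0,
\]
so $g(H)$ is a non-negative cubic polynomial in the bounded function $H$. Applying Lemma 3.1 to $H^2$ produces a sequence along which $H\to H_*$, $H_*^2 = \sup H^2 =: M^2 \le 2S$, $|\nabla H|\to 0$, and $M^2(1-S) + S\lambda H_* \le 0$. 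Next I would apply Lemma 3.1 to $\phi = g(H)$ and use identity \eqref{eq:18-6-15-1} for $\tfrac12\mathcal{L}\phi$ together with $\nabla\phi = \tfrac{3\lambda}{2}(H^2-S)\nabla H$; this produces a second limiting identity among $M^2, H_*, S, \lambda$, after distinguishing the two ways $\nabla\phi$ can vanish ($H^2\to S$ versus $|\nabla H|\to 0$). Combining these limiting identities with $g(H_*)\ge 0$ and $M^2\le 2S$, and eliminating $H_*$ and $|\nabla H|^2$, should force the polynomial in $(M^2, S, \lambda^2)$ to factor so as to give $\sup H^2 \in \{S, 2S\}$ together with $\lambda^2 S$ equal to one of $(S-1)^2$, $2(S-1)^2$, or $\tfrac{2(1+S)^2}{9}$, which are exactly cases (1), (2), (3).

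The main obstacle is this final step: the maximum principle applied to $\phi = g(H)$ requires careful subsequence extraction, because $\nabla\phi$ can vanish either via an interior critical point of $g$ (where $H \to \pm\sqrt S$ and $|\nabla H|$ need not tend to zero) or directly via $|\nabla H|\to 0$. Once all limiting equations are written down, the elimination of $H_*$ and $|\nabla H|^2$ and the factorization of the final polynomial in $M^2$ so as to isolate the three listed relations—including the ``spurious'' case (3), which is ruled out later in the proof of Theorem 1.1—is the core algebraic computation.
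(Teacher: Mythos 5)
Your setup (boundedness of $H$, Ricci bounded below, applicability of the generalized maximum principle) matches the paper, and your disposal of $S\le 1$ by applying Lemma 3.1 to $H$ and $-H$ is a correct, more elementary shortcut than anything in the paper: for $S<1$ the two inequalities $H_+(1-S)+S\lambda\le 0$ and $H_-(1-S)+S\lambda\ge 0$ do force $H$ constant, and the isoparametric classification then lands in cases (1) or (2). However, the heart of the theorem is the regime you defer, and there your proposal is a plan rather than a proof, with a structural flaw. You propose to extract two scalar limiting identities, one from applying Lemma 3.1 to $H^2$ and one from applying it to $\phi=\sum h_{ijk}^2=g(H)$, and then to eliminate. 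But these two applications of the maximum principle produce two \emph{different} sequences of points; the limit of $H$ along the $\phi$-sup sequence is not $H_*=\lim H$ along the $H^2$-sup sequence, so the quantities you intend to eliminate do not live in a common system of equations. Moreover, two scalar relations among $(\sup H^2, S,\lambda)$ cannot by themselves separate the outcome $\sup H^2=S$ (case (1)) from $\sup H^2=2S$ (cases (2) and (3)), nor distinguish (2) from (3).

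What the paper actually does, and what your plan is missing, is a componentwise limit analysis along the \emph{single} sequence $\{p_m\}$ realizing $\sup H^2$: one passes to subsequential limits $\bar h_{ij}=\bar\lambda_i\delta_{ij}$, $\bar h_{ijk}$, $\bar h_{ijkl}$ and splits into cases according to the limiting principal curvatures. The case $\bar\lambda_1\bar\lambda_2=0$ forces $\bar h_{ijk}=0$ and yields case (1) with $\sup H^2=S$; the case $\bar\lambda_1\ne\bar\lambda_2$ with both nonzero is excluded by a computation involving the explicit limits of the fourth covariant derivatives $\bar h_{ijkl}$ (obtained from $\nabla^2 H$, the Ricci identities, and differentiating $S=\mathrm{const}$ twice); and the umbilic case $\bar\lambda_1=\bar\lambda_2$ gives $\sup H^2=2S$ and, via the two expressions for $\tfrac12\mathcal L\sum h_{ijk}^2$ in Lemma 2.2, the factorization $\bigl(2+\bar H(\lambda-\bar H)\bigr)\bigl(2+2S+3\lambda\bar H\bigr)=0$ whose two factors are precisely cases (2) and (3). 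None of this tensorial information — in particular the sub-dichotomy producing the relation $9\lambda^2S=2(1+S)^2$ — is recoverable from the scalar identities $\mathcal L H^2$ and $\mathcal L\phi$ alone, so the elimination you describe cannot be carried out as stated. The gap is therefore genuine: the entire derivation of the three listed relations for $S>1$ remains to be done, and it requires the finer limit apparatus, not just the two scalar maximum-principle inequalities.
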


\begin{proof}
From Lemma 2.1, we have
\begin{equation*}
\dfrac12\mathcal{L}H^2=|\nabla  H|^2+H^2+S(\lambda-H)H.
\end{equation*}
At each point $p\in M^2$, we choose $e_1$  and $e_2$ such that
$$
h_{ij}=\lambda_i\delta_{ij}.
$$
From $2ab\leq \epsilon a^2+\dfrac1{\epsilon}b^2$, we obtain
$$
S=\lambda_1^2 +\lambda_2^2, \ \ H^2=(\lambda_1+\lambda_2)^2\leq 2(\lambda_1^2 +\lambda_2^2)=2S.
$$
Hence, we have on $M^2$
$$
H^2\leq 2S
$$
and the equality holds if and only if $\lambda_1=\lambda_2$. Since $S$ is constant, from the Gauss equations, we know that the Ricci curvature of $X: M^2\to \mathbb{R}^{3}$ is bounded from below. We can apply the generalized maximum principle for $\mathcal L$-operator
to the function $H^2$. Thus, there exists a sequence $\{p_m\}$ in $M^2$ such that
\begin{equation*}
\lim_{m\rightarrow\infty} H^2(p_m)=\sup H^2,\quad
\lim_{m\rightarrow\infty} |\nabla H^2(p_m)|=0,\quad
\limsup_{m\rightarrow\infty}\mathcal{L}H^2(p_m)\leq 0.
\end{equation*}
Since $X: M^2\to \mathbb{R}^{3}$ is a $\lambda$-surface, we have
\begin{equation}
H_{,i}=\sum_k h_{ik} \langle X,e_k \rangle, \quad i=1, 2.
\end{equation}
If $\sup H^2=0$, then $H\equiv0$. From the formula
\begin{equation}
\mathcal{L}H=H+S(\lambda-H),
\end{equation}
we get $\lambda S\equiv 0$. We conclude that   $S\equiv 0$ and $X: M^2\to \mathbb{R}^{3}$ is $\mathbb R^2$.  It is impossible because of  $\lambda\neq 0$. Hence,  we have $\sup H^2>0$.  Without loss of the generality, at each point $p_m$, we can assume $H(p_m)\neq 0$. From  \eqref{eq:2.1-3},
\eqref{eq:6-2-6}, \eqref{eq:16-2-8}, Lemma 2.1 and the definition of $S$, we know that
$\{h_{ij}(p_m)\}$,  $\{h_{ijk}(p_m)\}$ and $\{h_{ijkl}(p_m)\}$ are bounded sequences for $ i, j, k, l=1,2$.
We can assume
$$
 \lim_{m\rightarrow\infty}h_{ij}(p_m)=\bar h_{ij}=\bar \lambda_i\delta_{ij}, \quad  \lim_{m\rightarrow\infty}h_{ijk}(p_m)=\bar h_{ijk},
\quad \lim_{m\rightarrow\infty}h_{ijkl}(p_m)=\bar h_{ijkl},
$$
for $i, j, k, l=1, 2$.
\vskip1mm
\noindent
From Lemma 2.1, we  get
\begin{equation}
\begin{cases}
\begin{aligned}
&\lim_{m\rightarrow\infty} H^2(p_m)=\sup H^2=\bar H^2,\quad
\lim_{m\rightarrow\infty} |\nabla H^2(p_m)|=0,\\
&0\geq
\lim_{m\rightarrow\infty} |\nabla H|^2(p_m)+\bar H^2+S(\lambda-\bar H)\bar H.
\end{aligned}
\end{cases}
\end{equation}
From $\lim_{m\rightarrow\infty} |\nabla H^2(p_m)|=0$ and $|\nabla H^2|^2=4\sum_i(HH_{,i})^2$, we have
\begin{equation}
\bar H_{,k}=0,
\end{equation}
that is,
\begin{equation}\label{eq:6-23-1}
\bar h_{11k}+\bar h_{22k}=0, \ \ \text{\rm for} \  k=1, 2.
\end{equation}
From the definition of $\lambda$-surfaces, we obtain
\begin{equation}\label{eq:6-23-3}
\begin{cases}
\begin{aligned}
&\bar H_{,1}=\bar \lambda_1\lim_{m\rightarrow\infty} \langle X,e_1\rangle(p_m)=0,\\
&\bar H_{,2}=\bar \lambda_2 \lim_{m\rightarrow\infty} \langle X,e_2 \rangle(p_m)=0.
\end{aligned}
\end{cases}
\end{equation}
Since $S$ is constant, we know
$$
\sum_{i,j}h_{ij}h_{ijk}=0, \ \ \text{for } \ k=1, 2.
$$
Thus,
$$
\sum_{i,j}\bar h_{ij}\bar h_{ijk}=0, \ \ \text{for } \ k=1, 2,
$$
that is,
\begin{equation}\label{eq:6-23-2}
\bar\lambda_1\bar h_{11k}+\bar\lambda_2\bar h_{22k}=0.
\end{equation}
We next consider three cases.

\vskip2mm
\noindent
{\bf Case 1: $ \bar \lambda_2=0$}.

\noindent Since $\bar H^2\neq 0$, we have $\bar \lambda_1\neq 0$. From \eqref{eq:6-23-1} and \eqref{eq:6-23-2}, we get
\begin{equation*}
\bar h_{11k}=\bar h_{22k}=0,
\end{equation*}
for $k=1, 2$. Therefore, we have
 $\bar h_{ijk}=0$ for $i,j,k=1,2$.
 From \eqref{eq:18-3} in Lemma 2.1, we have
 \begin{equation*}
 0=S(1-S)+\lambda \bar H S
 \end{equation*}
 since $f_3=\frac{H(3S-H^2)}{2}$ and $\bar H^2=S$. Then we obtain
 \begin{equation*}
 \lambda\bar H=S-1, \ \ S=\sup H^2=\bar H^2=\dfrac{(S-1)^2}{\lambda^2 }.
 \end{equation*}

\noindent
{\bf Case 2: $ \bar \lambda_1=0$}.

\noindent Since $\bar H^2\neq 0$, we have $\bar \lambda_2\neq 0$. By making use of  the same arguments  in Case 1, we get
\begin{equation*}
 S=\sup H^2=\bar H^2=\dfrac{(S-1)^2}{\lambda^2}.
 \end{equation*}

\noindent
{\bf Case 3: $ \bar \lambda_1\bar \lambda_2 \neq0$}.

\noindent
From \eqref{eq:6-23-3}, we know
\begin{equation}
\lim_{m\rightarrow\infty} \langle X,e_1 \rangle(p_m)=\lim_{m\rightarrow\infty} \langle X,e_2 \rangle(p_m)=0.
\end{equation}
Since
\begin{equation}\label{eq:6-3-1}
\aligned
\nabla_{k}\nabla_{i}H
 =&\sum_{j}h_{ijk}\langle X,e_{j}\rangle+h_{ik}+\sum_{j}h_{ij}h_{jk}(\lambda-H),
\endaligned
\end{equation}
 we conclude
\begin{equation}
\bar H_{,ik}=\bar h_{ik}+\sum_j\bar h_{ij}\bar h_{jk}(\lambda-\bar H).
\end{equation}
In particular, we have
\begin{equation}\label{eq:6-23-7}
\begin{aligned}
&\bar h_{1111}+\bar h_{2211}=\bar \lambda_1+\bar \lambda_1^2(\lambda-\bar H),\\
&\bar h_{1122}+\bar h_{2222}=\bar \lambda_2+\bar \lambda_2^2(\lambda-\bar H),\\
&\bar h_{1112}+\bar h_{2212}=0,\\
&\bar h_{1121}+\bar h_{2221}=0.
\end{aligned}
\end{equation}
From Ricci identities \eqref{eq:6-2-6}, we obtain
\begin{equation}\label{eq:6-23-8}
\begin{aligned}
&\bar h_{1112}-\bar h_{1121}=0, \\
& \bar h_{2212}-\bar h_{2221}=0,\\
&\bar h_{1212}-\bar h_{1221}=\bar \lambda_1\bar \lambda_2(\bar \lambda_1-\bar \lambda_2).
\end{aligned}
\end{equation}
Since  $S$ is constant, we know
\begin{equation}
\sum_{i,j}\bar h_{ij}\bar h_{ijk}=0,\ \ {\text {for}}\ \ k=1,2
\end{equation}
and
\begin{equation}
\sum_{i,j}\bar h_{ijl}\bar h_{ijk}+\sum_{i,j}\bar h_{ij}\bar h_{ijkl}=0,\ \ {\text {for}}\ \ k, l=1,2.
\end{equation}
Especially,
\begin{equation}\label{eq:6-23-9}
\begin{aligned}
&\bar \lambda_1\bar h_{111}+\bar \lambda_2\bar h_{221}=0,\ \ \bar \lambda_1\bar h_{112}+\bar \lambda_2\bar h_{222}=0, \\
& \bar \lambda_1\bar h_{1111}+\bar \lambda_2\bar h_{2211}=-\bar h_{111}^2-2\bar h_{121}^2-\bar h_{221}^2,\\
&\bar \lambda_1\bar h_{1122}+\bar \lambda_2\bar h_{2222}=-\bar h_{112}^2-2\bar h_{122}^2-\bar h_{222}^2,\\
&\bar \lambda_1\bar h_{1112}+\bar \lambda_2 \bar h_{2212}
=-\bar h_{111}\bar h_{112}-2\bar h_{121}\bar h_{122}-\bar h_{221}\bar h_{222}.
\end{aligned}
\end{equation}
{\bf Subcase 3.1: $ \bar \lambda_1\neq\bar \lambda_2$}.

\noindent From \eqref{eq:6-23-1}, \eqref{eq:6-23-2} and \eqref{eq:18-3} in Lemma 2.1, we have
\begin{equation}
\bar h_{ijk}=0,\ \ \ i,j,k=1,2
\end{equation}
and
\begin{equation}
0= S(1-S)+\lambda \bar H \dfrac{3S-\bar H^2}{2}.
\end{equation}
From \eqref{eq:6-23-7}, \eqref{eq:6-23-8} and \eqref{eq:6-23-9}, we get
\begin{equation}\label{eq:6-23-10}
\begin{aligned}
&\bar h_{1112}=\bar h_{1222}=\bar h_{2221}=\bar h_{2111}=0, \\
&\bar h_{2211}=\dfrac{\bar \lambda_1^2}{\bar \lambda_1-\bar \lambda_2}(1+\bar \lambda_1(\lambda-\bar H)),\\
&\bar h_{1111}=-\dfrac{\bar \lambda_1\bar \lambda_2}{\bar \lambda_1-\bar \lambda_2}(1+\bar \lambda_1(\lambda-\bar H)),\\
&\bar h_{1122}=-\dfrac{\bar \lambda_2^2}{\bar \lambda_1-\bar \lambda_2}(1+\bar \lambda_2(\lambda-\bar H)),\\
&\bar h_{2222}=\dfrac{\bar \lambda_1\bar \lambda_2}{\bar \lambda_1-\bar \lambda_2}(1+\bar \lambda_2(\lambda-\bar H)).
\end{aligned}
\end{equation}
According to \eqref{eq:18-3} in Lemma 2.1:
\begin{equation*}
0=\sum_{i,j,k} h_{ijk}^2+S(1-S)+\lambda H \dfrac{3S-H^2}{2},
\end{equation*}
we have, for $l,m=1,2,$
\begin{equation}\label{eq:6-23-11}
\begin{aligned}
&2\sum_{i,j,k}h_{ijk}h_{ijkl}+\dfrac{3\lambda}{2}(S-H^2)H_{,l}=0, \\
&2\sum_{i,j,k}h_{ijkm}h_{ijkl}+2\sum_{i,j,k}h_{ijk}h_{ijklm}+\dfrac{3\lambda}{2}(S-H^2)H_{,lm}-3\lambda HH_{,m}H_{,l}=0.
\end{aligned}
\end{equation}
Since $\bar h_{ijk}=0$, we get from \eqref{eq:6-23-11} that
\begin{equation}
2\sum_{i,j,k}\bar h_{ijkm}\bar h_{ijkl}+\dfrac{3}{2}\lambda (S-\bar H^2)\bar H_{,lm}=0,\ \ \  \ {\text for}\ l,m=1,2,
\end{equation}
specifically,
\begin{equation*}\label{eq:6-23-?}
\begin{aligned}
&2\bar h_{1111}^2+6\bar h_{1121}^2+6\bar h_{1221}^2+2\bar h_{2221}^2+\dfrac{3}{2}\lambda (S-\bar H^2)(\bar h_{1111}+\bar h_{2211})=0, \\
&2\bar h_{1112}^2+6\bar h_{1122}^2+6\bar h_{1222}^2+2\bar h_{2222}^2+\dfrac{3}{2}\lambda (S-\bar H^2)(\bar h_{1122}+\bar h_{2222})=0.
\end{aligned}
\end{equation*}
Hence,  we have the following equations from \eqref{eq:6-23-10},
\begin{equation}\label{eq:6-23-12}
\begin{aligned}
&2\bar h_{1111}^2+6\bar h_{1221}^2+\dfrac{3}{2}\lambda (S-\bar H^2)(\bar h_{1111}+\bar h_{2211})=0, \\
&6\bar h_{1122}^2+2\bar h_{2222}^2+\dfrac{3}{2}\lambda (S-\bar H^2)(\bar h_{1122}+\bar h_{2222})=0.
\end{aligned}
\end{equation}
From \eqref{eq:6-23-7}, \eqref{eq:6-23-10},  \eqref{eq:6-23-12} and $S-H^2=-2\bar \lambda_1\bar\lambda_2$,
we know
\begin{equation}\label{eq:6-23-13}
\begin{cases}
&(1+\bar \lambda_1(\lambda-\bar H))
\biggl(\dfrac{2}{(\bar \lambda_1-\bar \lambda_2)^2}(1+\bar \lambda_1(\lambda-\bar H))
 (\bar \lambda_2^2+3\bar \lambda_1^2)-3\lambda \bar \lambda_2\biggl)=0, \\[4mm]
&(1+\bar \lambda_2(\lambda-\bar H))\biggl(\dfrac{2}{(\bar \lambda_1-\bar \lambda_2)^2}(1+\bar \lambda_2(\lambda-\bar H))
 (3\bar \lambda_2^2+\bar \lambda_1^2)-3\lambda \bar \lambda_1\biggl)=0.
 \end{cases}
 \end{equation}
Since
\begin{equation}\label{eq:6-23-15}
\lambda-\bar H=\dfrac{S(S-1)}{\bar \lambda_1^3+\bar \lambda_2^3}-\bar \lambda_1-\bar \lambda_2,
\end{equation}
by a direct calculation,  from \eqref{eq:6-23-13} and \eqref{eq:6-23-15}, we get that this subcase does not occur.

\noindent
In fact, from \eqref{eq:6-23-13},
we have
$$(1+\bar \lambda_1(\lambda-\bar H))(1+\bar \lambda_2(\lambda-\bar H))=0
$$ or
$$(1+\bar \lambda_1(\lambda-\bar H))(1+\bar \lambda_2(\lambda-\bar H))\neq 0.
$$

\noindent
If $(1+\bar \lambda_1(\lambda-\bar H))(1+\bar \lambda_2(\lambda-\bar H))=0$, we have
$1+\bar \lambda_1(\lambda-\bar H)=0$ and $1+\bar \lambda_2(\lambda-\bar H)\neq 0$ or
$1+\bar \lambda_2(\lambda-\bar H)=0$ and $1+\bar \lambda_1(\lambda-\bar H)\neq 0$
because of   $\bar \lambda_1\neq \bar\lambda_2$. Hence, we get
\begin{equation}\label{eq:7-6-1}
\begin{cases}
& 1+\bar \lambda_1(\lambda-\bar H)=0,\\
& \dfrac{2}{(\bar \lambda_1-\bar \lambda_2)^2}(1+\bar \lambda_2(\lambda-\bar H))
 (3\bar \lambda_2^2+\bar \lambda_1^2)-3\lambda \bar \lambda_1=0,
 \end{cases}
 \end{equation}
or
\begin{equation}\label{eq:7-6-1-2}
\begin{cases}
& 1+\bar \lambda_2(\lambda-\bar H)=0,\\
& \dfrac{2}{(\bar \lambda_1-\bar \lambda_2)^2}(1+\bar \lambda_1(\lambda-\bar H))
 (\bar \lambda_2^2+3\bar \lambda_1^2)-3\lambda \bar \lambda_2=0.
 \end{cases}
 \end{equation}
If \eqref{eq:7-6-1} holds,  we have
\begin{equation}
\lambda-\bar H=-\dfrac{1}{\bar \lambda_1}, \ \ 1+\bar \lambda_2(\lambda-\bar H)=\dfrac{\bar\lambda_1-\bar\lambda_2}{\bar\lambda_1},
\ \ \lambda \bar \lambda_1=\bar \lambda_1(\bar \lambda_1+\bar \lambda_2)-1.
\end{equation}
From \eqref{eq:7-6-1}, we know
\begin{equation}
\dfrac{2(3\bar \lambda_2^2+\bar \lambda_1^2)}{\bar \lambda_1(\bar \lambda_1-\bar \lambda_2)}
 -3(\bar \lambda_1(\bar \lambda_1+\bar \lambda_2)-1)=0,
 \end{equation}
 that is,
 \begin{equation}\label{eq:7-6-4}
 5\bar \lambda_1^2+6\bar \lambda_2^2+3\bar \lambda_1^2\bar \lambda_2^2-3\bar \lambda_1^4-3\bar\lambda_1\bar\lambda_2=0.
 \end{equation}
On the other hand, from \eqref{eq:6-23-15} and $1+\bar \lambda_1(\lambda-\bar H)=0$, we get
\begin{equation}
\bar \lambda_2(\bar \lambda_2-\bar \lambda_1)(\bar \lambda_2-\bar \lambda_1^2(\bar \lambda_2-\bar \lambda_1))=0,
\end{equation}
namely,
\begin{equation}\label{eq:7-6-2}
\bar \lambda_2-\bar \lambda_1^2(\bar \lambda_2-\bar \lambda_1)=0
\end{equation}
since $\bar \lambda_2(\bar \lambda_2-\bar \lambda_1)\neq0$.
From \eqref{eq:7-6-4} and  \eqref{eq:7-6-2},  we obtain
\begin{equation}
5\bar \lambda_1^2+9\bar \lambda_2^2=0.
\end{equation}
It is impossible. By the same assertion, we know that \eqref{eq:7-6-1-2} does not occur also.
Thus, we must have the following equations from \eqref{eq:6-23-13}
\begin{equation}\label{eq:7-6-5}
\begin{cases}
& \dfrac{2}{(\bar \lambda_1-\bar \lambda_2)^2}(1+\bar \lambda_1(\lambda-\bar H))
 (\bar \lambda_2^2+3\bar \lambda_1^2)-3\lambda \bar \lambda_2=0,\\
& \dfrac{2}{(\bar \lambda_1-\bar \lambda_2)^2}(1+\bar \lambda_2(\lambda-\bar H))
 (3\bar \lambda_2^2+\bar \lambda_1^2)-3\lambda \bar \lambda_1=0.
 \end{cases}
 \end{equation}
From \eqref{eq:7-6-5}, we get
\begin{equation}
\dfrac{\bar \lambda_1}{(1+\bar \lambda_2(\lambda-\bar H))
 (\bar \lambda_1^2+3\bar \lambda_2^2)}=\dfrac{\bar \lambda_2}{(1+\bar \lambda_1(\lambda-\bar H))
 (\bar \lambda_2^2+3\bar \lambda_1^2)}.
 \end{equation}
Therefore, we infer
\begin{equation}\label{eq:7-6-6}
3\bar \lambda_1^2+2\bar \lambda_1\bar \lambda_2+3\bar \lambda_2^2+3(\bar \lambda_1+\bar \lambda_2)(\bar \lambda_1^2+\bar \lambda_2^2)(\lambda-\bar H)=0.
\end{equation}
Substituting
\begin{equation}
\lambda=\dfrac{S(S-1)}{\bar \lambda_1^3+\bar \lambda_2^3}=\dfrac{2S(S-1)}{\bar H(3S-\bar H^2)},
\end{equation}
and $2\bar \lambda_1\bar \lambda_2=\bar H^2-S$
into \eqref{eq:7-6-6}, we get
\begin{equation}
2S+\bar H^2+3S\times\dfrac{2S(S-1)}{3S-\bar H^2}-3\bar H^2 S=0,
\end{equation}
namely,
\begin{equation}
S\bar H^2-\bar H^4-9\bar H^2 S^2+3\bar H^4 S+6S^3=0.
\end{equation}
Therefore, we infer
\begin{equation}
(S-\bar H^2)(6S^2-3\bar H^2 S+\bar H^2)=0.
\end{equation}
This is impossible since $S-\bar H^2=-2\bar \lambda_1\bar \lambda_2\neq0$ and $6S^2-3\bar H^2 S+\bar H^2>0$.

\noindent {\bf Subcase 3.2: $ \bar \lambda_1=\bar \lambda_2$}.
\begin{equation}
\sup  H^2=\bar H^2=\lim_{m\rightarrow\infty}H^2(p_m)=(\bar \lambda_1+\bar \lambda_2)^2=2S.
\end{equation}
From \eqref{eq:18-3} in Lemma 2.1,   one has
\begin{equation}
0= \sum_{i,j,k}\bar h_{ijk}^2+S(1-S)+\lambda \bar H \dfrac{S}{2},
\end{equation}
From \eqref{eq:6-23-1}, \eqref{eq:6-23-7}, \eqref{eq:6-23-8} and \eqref{eq:6-23-9}, we get
\begin{equation}\label{eq:6-23-17}
\begin{aligned}
&\bar h_{1112}=\bar h_{2111},\\
&\bar h_{1122}=\bar h_{2211},\\
& \bar h_{1222}=\bar h_{2221},\\
&\bar h_{2222}=\bar h_{1111},\\
& \bar h_{1112}+\bar h_{1222}=0,\\
&\bar h_{2211}+\bar h_{1111}=\bar \lambda_1+\bar \lambda_1^2(\lambda-\bar H),\\
&\bar h_{111}=-\bar h_{122},\ \bar h_{112}=-\bar h_{222}, \\
&\sum_{i,j,k}\bar h_{ijk}^2=4(\bar h_{111}^2+\bar h_{112}^2)=S(S-1)-\dfrac{\lambda\bar HS}2.
\end{aligned}
\end{equation}
From \eqref{eq:6-23-11}, we have
\begin{equation*}\label{eq:6-23-}
\begin{aligned}
&2\bar h_{111}\bar h_{1111}+6\bar h_{112}\bar h_{1121}+6\bar h_{122}\bar h_{1221}+2\bar h_{222}\bar h_{2221}+
\dfrac{3}{2}\lambda(S-\bar H^2)(\bar h_{111}+\bar h_{221})=0,\\
&2\bar h_{111}\bar h_{1112}+6\bar h_{112}\bar h_{1122}+6\bar h_{122}\bar h_{1222}+2\bar h_{222}\bar h_{2222}+
\dfrac{3}{2}\lambda(S-\bar H^2)(\bar h_{112}+\bar h_{222})=0.
\end{aligned}
\end{equation*}
Thus, from \eqref{eq:6-23-17} and the above equations, we obtain
\begin{equation}\label{eq:6-23-16}
\begin{cases}
&\bar h_{122}(-\bar h_{1111}+3\bar h_{2211})+4\bar h_{211}\bar h_{2111}=0,\\[2mm]
&-4\bar h_{122}\bar h_{2111}+\bar h_{211}(-\bar h_{1111}+3\bar h_{2211})=0.
\end{cases}
\end{equation}
If
\begin{equation*}
(-\bar h_{1111}+3\bar h_{2211})^2+16\bar h_{2111}^2\neq0,
\end{equation*}
 we have the following equations from \eqref{eq:6-23-16} and\eqref{eq:6-23-17}
 \begin{equation*}
 \bar h_{ijk}=0,\ \ i,j,k=1,2
 \end{equation*}
 and
 \begin{equation*}
0=S(1-S)+\lambda \bar H \dfrac{S}{2}, \ \ \ \  2S=\sup H^2=\bar H^2=\dfrac{4(S-1)^2}{\lambda^2}.
\end{equation*}
If
\begin{equation*}
(-\bar h_{1111}+3\bar h_{2211})^2+16\bar h_{2111}^2=0,
\end{equation*}
from  \eqref{eq:6-23-17}, we have
\begin{equation}\label{eq:6-23-22}
\bar h_{2111}=\bar h_{2221}=0,\ \bar h_{1111}=3\bar h_{2211}, \ \bar h_{2211}=\dfrac{1}{4}(\bar \lambda_1+\bar \lambda_1^2(\lambda-\bar H)).
\end{equation}
From Lemma 2.2 and $\lambda_1=\lambda_2$, we have
\begin{equation}\label{eq:6-23-20}
\aligned
&\frac{1}{2}\lim_{m\rightarrow\infty}\mathcal{L}\sum_{i, j,k}(h_{ijk})^{2}(p_m)\\
=&\sum_{i,j,k,l}(\bar h_{ijkl})^{2}+(2-S)\sum_{i,j,k}(\bar h_{ijk})^{2}+6\sum_{i,j,k,l,p}\bar h_{ijk}\bar h_{il}\bar h_{jp}\bar h_{klp}\\
&-3\sum_{i,j,k,l,p}\bar h_{ijk}\bar h_{ijl}\bar h_{kp}\bar h_{lp}+3\lambda \sum_{i,j,k,l}\bar h_{ijk}\bar h_{ijl}\bar h_{kl}\\
=&2 \bar h_{1111}^2+6\bar h_{2211}^2+4(2-S)(\bar h_{111}^2+\bar h_{112}^2)+12S(\bar h_{111}^2+\bar h_{112}^2)\\
&-6S(\bar h_{111}^2+\bar h_{112}^2)+6\lambda \bar H(\bar h_{111 }^2+\bar h_{ 112}^2)\\
=&24\bar h_{2211}^2+(8+2S+6\lambda \bar H)(\bar h_{111 }^2+\bar h_{ 112}^2)\\
\endaligned
\end{equation}
and
\begin{equation}\label{eq:6-23-21}
\aligned
&\frac{1}{2}\lim_{m\rightarrow\infty}\mathcal{L}\sum_{i,j,k}(h_{ijk})^{2}(p_m)\\
=&\frac{3}{4}\lambda \bar H^3+\frac{3}{4}\lambda \bar H^2 S(\lambda-\bar H)
-\dfrac{3}{4}\lambda S \bar H-\dfrac{3}{4}\lambda S^2 (\lambda-\bar H)\\
=&\dfrac{3}{4}\lambda S\bar H+\dfrac{3}{4}\lambda S^2(\lambda-\bar H).
\endaligned
\end{equation}
Hence, we get the following equation from \eqref{eq:6-23-20} and \eqref{eq:6-23-21}
\begin{equation}\label{eq:6-23-23}
24\bar h_{2211}^2  +(8+2S+6\lambda \bar H)(\bar h_{111 }^2+\bar h_{ 112}^2)-\dfrac{3}{4}\lambda S\bar H-\dfrac{3}{4}\lambda S^2(\lambda-\bar H)=0.
\end{equation}
From \eqref{eq:6-23-22}  and $\bar \lambda_1=\bar \lambda_2$, we have
\begin{equation*}
\bar h_{2211}=\dfrac{\bar \lambda_1}{8}(2+H(\lambda-\bar H)).
\end{equation*}
Hence, we get
$$
3(2+H(\lambda-\bar H))^2+2(8+2S+6\lambda \bar H)(2S-2-\lambda\bar H)-12\lambda \bar H-12\lambda S(\lambda-\bar H)=0,
$$
that is, from $2\bar H^2=S$,
\begin{equation*}
\bigl(2+\bar H(\lambda-\bar H)\bigl)\biggl(2+2S+3\lambda \bar H\biggl)=0.
\end{equation*}
Thus,  either $2+\bar H(\lambda-\bar H)=0$ or
$2+2S+3\lambda  \bar H=0$. \\
If $2+\bar H(\lambda-\bar H)=0$, we have
\begin{equation*}
2\lambda^2S =\lambda^2\bar H^2=  4 (1-S)^2,
\end{equation*}
that is,
$$
\lambda^2S=2(S-S)^2, \ \ \sup H^2=2S.
$$
If $2+2S+3\lambda  \bar H=0$,
\begin{equation*}
9\lambda^2\bar H^2=4(1+S)^2,
\end{equation*}
that is,
$$
9\lambda^2S= 2(1+S)^2  \ {\rm and } \ \sup H^2=2S.
$$
We complete this proof of Theorem \ref{theorem 3}.
\end{proof}

\begin{theorem}\label{theorem 4}
Let $X:M^2\rightarrow \mathbb{R}^{3}$ be a $2$-dimensional $\lambda$-surface. If either
 $\lambda^2S=(S-1)^2$, or $\lambda^2S=2(S-1)^2$, or $9\lambda^2S=2(S-1)^2$,  then the mean curvature  $H$ satisfies $H\neq 0$ on  $ M^2$.
\end{theorem}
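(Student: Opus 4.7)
The plan is to argue by contradiction. Suppose $H(p_{0})=0$ at some $p_{0}\in M^{2}$. At $p_{0}$, diagonalize the second fundamental form, $h_{ij}(p_{0})=\lambda_{i}\delta_{ij}$; the condition $\lambda_{1}+\lambda_{2}=0$ gives $\lambda_{2}=-\lambda_{1}$, hence $S=2\lambda_{1}^{2}$ (with $\lambda_{1}\neq 0$, because $\lambda\neq 0$ forces $S>0$) and $f_{3}(p_{0})=\lambda_{1}^{3}+\lambda_{2}^{3}=0$. Since $S$ is constant, $\mathcal{L}S\equiv 0$, so identity \eqref{eq:18-3} evaluated at $p_{0}$ collapses to
\[
\sum_{i,j,k}h_{ijk}^{2}(p_{0})\;=\;S(S-1).
\]
Non-negativity of the left side forces $S\ge 1$, so the strict inequality $S<1$ already gives a contradiction. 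The case $S=1$ forces $\lambda=0$ under each of the first two algebraic relations, contradicting $\lambda\neq 0$; in the third it is a degenerate boundary case with $h_{ijk}(p_{0})=0$, handled by specialization. The substantive regime is therefore $S>1$.

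For $S>1$, I invoke the generalized maximum principle (Lemma~3.1 of the excerpt), applicable because the constancy of $S$ and the Gauss equation make the Ricci curvature of $M^{2}$ bounded below. Applied to $H$ and passing to the limit in $\mathcal{L}H=(1-S)H+\lambda S$, it yields $\sup H\ge c$, where $c:=\lambda S/(S-1)$. A direct algebraic check using the hypothesis gives $c^{2}=S$ in Case~1 and $c^{2}=2S$ in Case~2, which match the values $\sup H^{2}=S$ and $\sup H^{2}=2S$ established in Theorem~\ref{theorem 3}. Combined with the pointwise bound $|H|\le\sqrt{\sup H^{2}}=|c|$, this pins down $\sup H=c$ (the opposite sign being treated symmetrically). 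Setting $\psi:=c-H\ge 0$ and using $(S-1)c=\lambda S$, a short computation gives
\[
\mathcal{L}\psi\;=\;-(S-1)\,\psi\;\le\;0,
\]
so $\psi$ is non-negative and $\mathcal{L}$-superharmonic with $\inf\psi=0$. If this infimum is attained at an interior point, the strong minimum principle for $\mathcal{L}$ forces $\psi\equiv 0$, hence $H\equiv c$, contradicting $\psi(p_{0})=c>0$.

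The hard part will be the subcase in which $\inf\psi=0$ is achieved only in the limit, so that the strong minimum principle cannot be applied directly. To close this gap I plan to use identity \eqref{eq:6-2-17} of Lemma~2.2 at $p_{0}$: because $\lambda_{2}=-\lambda_{1}$, the cubic and quartic contractions of $h_{ij}$ with $h_{ijk}$ undergo substantial cancellation (odd powers of $\lambda_{1}$ drop out), and matching \eqref{eq:6-2-17} against \eqref{eq:18-6-15-1} at $p_{0}$ produces an explicit polynomial expression for $\sum_{i,j,k,l}h_{ijkl}^{2}(p_{0})$ in $S$ and $\lambda^{2}S$. Imposing non-negativity of this quantity already excludes a range of $S$ in Case~3, and together with the pointwise identity $|\nabla H|^{2}(p_{0})=S(S-1)$ (which follows from $\nabla S=0$ and the total symmetry of $h_{ijk}$), combined with the generalized maximum principle applied to an auxiliary function on $\{\psi>0\}$ such as $\log\psi$ (using $\mathcal{L}\log\psi=-(S-1)-|\nabla\log\psi|^{2}$), it rules out the non-attained-infimum scenario in all three cases and completes the contradiction.
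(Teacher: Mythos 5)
Your opening is sound and coincides with the paper's: at a zero $p_0$ of $H$ one has $\lambda_2=-\lambda_1\neq 0$, $f_3(p_0)=0$, and $\mathcal L S=0$ forces $\sum_{i,j,k}h_{ijk}^2(p_0)=S(S-1)$, hence $S\ge 1$ (this is exactly \eqref{eq:6-23-26}). But from there the paper does something entirely different from what you propose: it never invokes the generalized maximum principle in this theorem. It differentiates the constancy of $S$ twice, combines the result with the Ricci identities, the twice-differentiated $\lambda$-surface equation \eqref{eq:6-3-1}, and the two independent expressions \eqref{eq:6-2-17} and \eqref{eq:18-6-15-1} for $\tfrac12\mathcal L\sum h_{ijk}^2$, all evaluated \emph{at the single point} $p_0$. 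This yields a closed polynomial system in $h_{ijk}(p_0)$, $h_{ijkl}(p_0)$, $\lambda_1$, $\lambda$, $S$, which is then shown to be inconsistent with each of the three numerical hypotheses by a case analysis on the vanishing of $h_{111}(p_0)$ and $h_{112}(p_0)$ (equations \eqref{eq:6-23-27}--\eqref{eq:6-23-35}). The argument is purely local and needs neither completeness nor any attainment of extrema. What you label ``the hard part'' and defer is, in substance, the whole proof.

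The global route you sketch has a genuine gap that your proposed remedies do not close. First, the conclusion $\sup H=c$ with $c=\lambda S/(S-1)$ only matches $\sup H^2$ in your Cases 1 and 2; under $9\lambda^2S=2(1+S)^2$ one gets $c^2=2S/9$ while Theorem \ref{theorem 3} gives $\sup H^2=2S$, so $\psi=c-H$ is not nonnegative and the scheme does not even start (and invoking Theorem \ref{theorem 3} to identify $\sup H^2$ from a numerical relation presumes the three alternatives there are mutually exclusive, which has not been shown). Second, and decisively, when $\inf\psi=0$ is not attained the strong minimum principle is unavailable, and the auxiliary function $\log\psi$ gives nothing: $\mathcal L\log\psi=-(S-1)-|\nabla\log\psi|^2$ is bounded above by the negative constant $-(S-1)$ while $\log\psi\le\log(2c)$ is bounded above, so the generalized maximum principle's conclusion $\limsup\mathcal L\log\psi\le 0$ is automatically satisfied and produces no contradiction; moreover the open set $\{\psi>0\}$ is not complete, so Lemma 3.1 does not apply on it in the first place. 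A nonnegative solution of $\mathcal L\psi=-(S-1)\psi$ on a complete $\lambda$-surface is a positive eigenfunction of the drift Laplacian with positive eigenvalue, and no Liouville-type rigidity for such functions is available here without further hypotheses. To repair the proof you should carry out the pointwise computation at $p_0$ in full, as the paper does.
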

\begin{proof}
If there exists a point $p \in M^2$ such that $H=0$ at $p$, then we know, at $p$,
$$
H=\lambda_1+\lambda_2=0, \ S=\lambda_1^2+\lambda_2^2=2\lambda_1^2.
$$
Since $S\neq0$, we get $\lambda_1(p)=-\lambda_2(p)\neq0$.
From
$$
H_{,i}=\sum_{k}h_{ik}\langle X, e_k\rangle, \ \ \text{\rm for} \ \ i=1, 2,
$$
we have
\begin{equation}\label{eq:6-23-0}
h_{111}+h_{221}=\lambda_1\langle X, e_1\rangle,\ \  h_{112}+h_{222}=\lambda_2\langle X, e_2\rangle.
\end{equation}
Since  $S$ is constant, we obtain,
\begin{equation}\label{eq:6-23-}
\begin{aligned}
&\sum_{i,j} h_{ij} h_{ijk}=0,\ \ {\text {for}}\ \ k=1,2\\
&\sum_{i,j} h_{ijl} h_{ijk}+\sum_{i,j} h_{ij} h_{ijkl}=0,\ \ {\text {for}}\ \ k, l=1,2.
\end{aligned}
\end{equation}
Hence, we have
\begin{equation}\label{eq:6-23-24}
\begin{aligned}
& h_{111}=h_{221}, \  h_{112}=h_{222},\\
&  \lambda_1 (h_{1111}- h_{2211})=-2h_{111}^2-2 h_{112}^2,\\
&\lambda_1( h_{1122}- h_{2222})= -2h_{111}^2-2 h_{112}^2,\\
&\lambda_1 (h_{1112}-   h_{2212})=-4h_{111} h_{112},\\
&\lambda_1 (h_{1121}-   h_{2221})=-4h_{111} h_{112}.
\end{aligned}
\end{equation}
From \eqref{eq:6-3-1}, we know
\begin{equation*}
 H_{,ik}=\sum_{j}h_{ijk}\langle X, e_j\rangle+h_{ik}+\sum_j h_{ij} h_{jk}\lambda.
\end{equation*}
Thus,  from \eqref{eq:6-23-0} and  $h_{111}=h_{221}, \  h_{112}=h_{222}$, we  get
\begin{equation}\label{eq:6-23-25}
\begin{aligned}
& h_{1111}+h_{2211}=\dfrac{2(h_{111}^2-h_{112}^2)}{\lambda_1}+\lambda_1+ \lambda_1^2\lambda,\\
& h_{1122}+h_{2222}=\dfrac{2(h_{111}^2-h_{112}^2)}{\lambda_1}+\lambda_2+\lambda_2^2\lambda,\\
& h_{1112}+h_{2212}=0,\\
& h_{1121}+h_{2221}=0.
\end{aligned}
\end{equation}
From \eqref{eq:18-3} in Lemma 2.1 and \eqref{eq:6-23-24}, we obtain, at $p$,
\begin{equation*}
0=\sum_{i,j,k} h_{ijk}^2+S(1-S)=4(h_{221}^2+h_{112}^2)+S(1-S),
\end{equation*}
that is,
\begin{equation}\label{eq:6-23-26}
4(h_{221}^2+h_{112}^2)=S(S-1),
\end{equation}
and
\begin{equation}
2\sum_{i,j,k}h_{ijk}h_{ijkl}+\dfrac{3\lambda}{2}SH_{,l}=0,\ \ \ \ {\rm  for}\ l=1,2.
\end{equation}
In particular,
\begin{equation*}
\begin{cases}
&2h_{111}h_{1111}+6h_{112}h_{1121}+6h_{122}h_{1221}+2h_{222}h_{2221}+\dfrac{3}{2}\lambda S(h_{111}+h_{221})=0,\\[2mm]
&2h_{111}h_{1112}+6h_{112}h_{1122}+6h_{122}h_{1222}+2h_{222}h_{2222}+\dfrac{3}{2}\lambda S(h_{112}+h_{222})=0,
\end{cases}
\end{equation*}
that is,
\begin{equation}\label{eq:6-23-27}
\begin{cases}
&h_{111}(h_{1111}+3h_{2211}+\dfrac{3}{2}\lambda S)+h_{112}(3h_{2111}+h_{2221})=0,\\[2mm]
&h_{111}(h_{1112}+3h_{1222})+h_{112}(3h_{1122}+h_{2222}+\dfrac{3}{2}\lambda S)=0.
\end{cases}
\end{equation}
From \eqref{eq:6-23-24} and  \eqref{eq:6-23-25}, we have
\begin{equation}\label{eq:6-23-27-2}
\begin{aligned}
&h_{2211}=\dfrac{2h_{111}^2}{\lambda_1}+\dfrac{\lambda_1+\lambda_1^2\lambda}2,\\
&h_{1111}=-\dfrac{2h_{112}^2}{\lambda_1}+\dfrac{\lambda_1+\lambda_1^2\lambda}2,\\
&h_{1122}=-\dfrac{2h_{112}^2}{\lambda_1}+\dfrac{\lambda_2+\lambda_2^2\lambda}2,\\
&h_{2222}=\dfrac{2h_{111}^2}{\lambda_1}+\dfrac{\lambda_2+\lambda_2^2\lambda}2,\\
&h_{2221}=\dfrac{2h_{111}h_{112}}{\lambda_1},\ \ \ \ h_{2111}=-\dfrac{2h_{111}h_{112}}{\lambda_1},\\
&h_{1112}=-\dfrac{2h_{111}h_{112}}{\lambda_1},\ \ \ \ h_{1222}=\dfrac{2h_{111}h_{112}}{\lambda_1}.
\end{aligned}
\end{equation}
From $h_{111}=h_{221}, \  h_{112}=h_{222}$ and $\lambda_1=-\lambda_2$, at $p$,  we know
$$
\sum_{i,j,k,l,p}h_{ijk}h_{il}h_{jp}h_{klp}=0, \ \  \sum_{i,j,k,l}h_{ijk}h_{ijl}h_{kl}=0,
\sum_{i,j,k,l,p}h_{ijk}h_{ijl}h_{kp}h_{lp}=\dfrac{S}2S(S-1).
$$
From Lemma 2.2, we get
\begin{equation}\label{eq:6-23-28}
\aligned
&\frac{1}{2}\mathcal{L}\sum_{i, j,k}(h_{ijk})^{2}\\
=&\sum_{i,j,k,l}(h_{ijkl})^{2}+(2-S)\sum_{i,j,k}(h_{ijk})^{2}+6\sum_{i,j,k,l,p}h_{ijk}h_{il}h_{jp}h_{klp}\\
&-3\sum_{i,j,k,l,p}h_{ijk}h_{ijl}h_{kp}h_{lp}+3\lambda \sum_{i,j,k,l}h_{ijk}h_{ijl}h_{kl}\\
=&h_{1111}^2+3h_{2111}^2+3h_{2211}^2+h_{2221}^2+h_{2222}^2+3b_{1222}^2+3h_{1122}^2+h_{1112}^2\\
&+\dfrac{(4-5S)S(S-1)}2
\endaligned
\end{equation}
and
\begin{equation}\label{eq:6-23-29}
\aligned
&\frac{1}{2}\mathcal{L}\sum_{i,j,k}(h_{ijk})^{2}\\
=&\dfrac{3}{2}\lambda H|\nabla H|^{2}+\frac{3}{4}\lambda H^3+\frac{3}{4}\lambda H^2 S(\lambda-H)
-\dfrac{3}{4}\lambda S H-\dfrac{3}{4}\lambda S^2 (\lambda-H)\\
=&-\dfrac{3}{4}\lambda^2S^2.
\endaligned
\end{equation}
Hence, we get the following equation from \eqref{eq:6-23-28} and \eqref{eq:6-23-29}
\begin{equation}\label{eq:6-23-30}
\aligned
&h_{1111}^2+3h_{2111}^2+3h_{2211}^2+h_{2221}^2+h_{2222}^2+3b_{1222}^2+3h_{1122}^2+h_{1112}^2\\
&+\dfrac{(4-5S)S(S-1)}2+\dfrac{3}{4}\lambda^2S^2=0.
\endaligned
\end{equation}
We now have to consider four cases.

\noindent {\bf Case 1: $h_{111}=0$, $h_{ 112}=0$}.

\noindent
From \eqref{eq:6-23-26}, we have $S=1$. It is impossible from \eqref{eq:6-23-30}.

\noindent {\bf Case 2: $h_{111}=0$, $h_{112}\neq0$}.

\noindent In this case, we know the following equations from \eqref{eq:6-23-27-2} and \eqref{eq:6-23-26},
\begin{equation}
 h_{2221}=h_{2111}=h_{1112}=h_{1222}=0,\ \
\end{equation}
$$
h_{2211}=\dfrac{\lambda_1}{2}+\dfrac{1}{4}S\lambda, \ \
h_{1111}=\dfrac{S\lambda}4+\dfrac{3}2\lambda _1-  S\lambda_1,
$$

$$
  h_{1122}=-\lambda_1 S+\dfrac{\lambda_1}{2}+\dfrac{1}{4}S\lambda, \ \
  h_{2222}=\dfrac{S\lambda}{4}- \dfrac{\lambda_1}2.
$$
Putting  these equations into \eqref{eq:6-23-30}, we get
\begin{equation}\label{eq:6-21-4}
-2S^2+5\lambda^2S-8\lambda_1\lambda S+6S+8\lambda_1\lambda=0.
\end{equation}
From the second equation in \eqref{eq:6-23-27}, we know
\begin{equation}
5\lambda_1\lambda=3S-1.
\end{equation}
Therefore, we conclude
\begin{equation}\label{eq:6-23-31}
25S\lambda^2=2(3S-1)^2 \  {\rm and} \
 -2S^2+5\lambda^2S-8(\dfrac{3S-1}{5})(S-1)+6S=0.
\end{equation}
We  get
\begin{equation*}
S=3, \ \ \lambda^2=\dfrac{128}{75}
\end{equation*}
because of $S\geq 1$ from \eqref{eq:6-23-26}.
It contradicts   either $S\equiv\dfrac{(S-1)^2}{\lambda^2 }$, or $S\equiv\dfrac{2(S-1)^2}{\lambda^2}$, or $S\equiv\dfrac{9\lambda^2 S^2}{2(1+S)^2}$.

\noindent {\bf Case 3: $h_{112}=0$, $h_{111}\neq0$}.

\noindent
By making use of the same assertion as in the above case, we know that it is impossible.

\noindent {\bf Case 4: $h_{211}\neq0$, $h_{221}\neq0$}.

\noindent Putting \eqref{eq:6-23-27-2}  into \eqref{eq:6-23-27}

\begin{equation}\label{eq:6-23-35}
\begin{cases}
&\dfrac{6}{\lambda_1}(h_{111}^2-h_{ 112}^2)+ 2\lambda_1+\lambda S=0,\\[4mm]
&\dfrac{6}{\lambda_1}(h_{111}^2-h_{ 112}^2)+ 2\lambda_2+ \lambda S=0.
\end{cases}
\end{equation}
Hence, we have $\lambda_1=0$, which contradicts $S\neq0$. Hence, we conclude  that  $H\neq 0$ on $M^2$.

\end{proof}

\vskip2mm
\noindent
\begin{theorem}\label{theorem 2}
Let $X:M^2\rightarrow \mathbb{R}^{3}$ be a $2$-dimensional complete $\lambda$-surface with constant squared norm $S$ of the second fundamental form. Then
either $\lambda^2S=(S-1)^2$ and $ \inf H^2=S$, or  $\lambda^2S=2(S-1)^2$ and $\inf H^2=2S$.
\end{theorem}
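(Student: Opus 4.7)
The strategy mirrors the proof of Theorem~\ref{theorem 3}, now applying the generalized maximum principle (Lemma~3.1) to $-H^{2}$ in order to extract information at the infimum. By Theorem~\ref{theorem 4}, $H$ is nowhere zero on $M^{2}$, so $H$ has a fixed sign by continuity and connectedness; assume $H>0$. The pointwise identity $2S-H^{2}=(\lambda_{1}-\lambda_{2})^{2}\ge 0$ gives $H^{2}\le 2S$, so $-H^{2}$ is bounded above, while constancy of $S$ bounds the Ricci curvature from below through the Gauss equation \eqref{eq:2.1-3}. Lemma~3.1 then produces a sequence $\{q_{m}\}\subset M^{2}$ with
\begin{equation*}
\lim_{m}H^{2}(q_{m})=\inf H^{2},\qquad \lim_{m}|\nabla H^{2}|(q_{m})=0,\qquad \liminf_{m}\mathcal{L}H^{2}(q_{m})\ge 0.
\end{equation*}
Passing to a subsequence and using a diagonalizing frame, extract limits $\bar h_{ij}=\bar\lambda_{i}\delta_{ij}$, $\bar h_{ijk}$, $\bar h_{ijkl}$, and set $\bar H=\bar\lambda_{1}+\bar\lambda_{2}$.

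The first step is to rule out $\bar H=0$. If $\bar H=0$, then $\bar\lambda_{1}=-\bar\lambda_{2}\ne 0$ (the case $S=0$ would force $\lambda=0$, excluded here). The algebraic manipulations in the proof of Theorem~\ref{theorem 4} --- \eqref{eq:6-23-0}, \eqref{eq:6-23-24}, \eqref{eq:6-23-25}, \eqref{eq:6-23-26}, \eqref{eq:6-23-27}, \eqref{eq:6-23-27-2} and the four ensuing subcases --- are pointwise consequences of $S$ being constant, of the $\lambda$-surface identity $H_{,i}=h_{ij}\langle X,e_{j}\rangle$, and of the Codazzi/Ricci identities, so they transfer to the limit values $\bar h_{\cdot}$. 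For instance, in Case~4 of Theorem~\ref{theorem 4}, subtracting the two identities in \eqref{eq:6-23-35} cancels the $\langle X,e_{i}\rangle$ factors and yields $\bar\lambda_{1}=\bar\lambda_{2}$, contradicting $\bar H=0$ with $\bar\lambda_{1}\ne 0$; the other three subcases give analogous contradictions. Hence $\bar H\ne 0$, and $|\nabla H^{2}|=2|H||\nabla H|\to 0$ gives $|\nabla H|(q_{m})\to 0$.

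Now I split on the limit eigenvalues, exactly as in Theorem~\ref{theorem 3}. If $\bar\lambda_{1}=0$ or $\bar\lambda_{2}=0$, then $\bar H^{2}=S=\inf H^{2}$, and the same computation using $\bar H_{,k}=0$, $\bar\lambda_{1}\bar h_{11k}+\bar\lambda_{2}\bar h_{22k}=0$, and \eqref{eq:18-3} gives $\bar h_{ijk}=0$ and $\lambda\bar H=S-1$, so $\lambda^{2}S=(S-1)^{2}$ --- conclusion~(1). If $\bar\lambda_{1},\bar\lambda_{2}\ne 0$ are distinct, the purely algebraic derivation of subcase~3.1 of Theorem~\ref{theorem 3} (which uses $\mathcal{L}S=0$, Lemma~2.2 and the identities, and does not depend on the sign of $\mathcal{L}H^{2}$) again produces $(S-\bar H^{2})(6S^{2}-3\bar H^{2}S+\bar H^{2})=0$, which is impossible.

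The remaining case is $\bar\lambda_{1}=\bar\lambda_{2}\ne 0$, for which $\bar H^{2}=2S$ and therefore $\inf H^{2}=2S$. Combined with the global bound $H^{2}\le 2S$, this forces $H^{2}\equiv 2S$ on $M^{2}$, so $\lambda_{1}\equiv\lambda_{2}$ everywhere, and $M^{2}$ is totally umbilic; by completeness $M^{2}$ is a round sphere $S^{2}(r)$. On such a sphere, the $\lambda$-surface equation with inward normal gives $\lambda=2/r-r$ and $S=2/r^{2}$, so $\lambda^{2}S=2(S-1)^{2}$ and $H^{2}=2S$ --- conclusion~(2). This umbilic reduction simultaneously disposes of the spurious third alternative $9\lambda^{2}S=2(1+S)^{2}$ of Theorem~\ref{theorem 3}: whenever the analogue of subcase~3.2 is reached at the infimum, $M^{2}$ collapses to a sphere, which automatically satisfies $\lambda^{2}S=2(S-1)^{2}$. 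The main obstacle is the $\bar H=0$ step, where the pointwise argument of Theorem~\ref{theorem 4} must be transferred along the sequence $\{q_{m}\}$ without control on $\langle X,e_{i}\rangle(q_{m})$, so one has to track carefully which combinations make these terms cancel out of each contradiction.
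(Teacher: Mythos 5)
Your proposal is correct and follows essentially the same route as the paper: apply the generalized maximum principle to $-H^{2}$, rule out $\bar H=0$ by transferring the pointwise argument of Theorem~\ref{theorem 4} to the limit sequence, reuse the (purely algebraic, sign-independent) case analysis of Theorem~\ref{theorem 3} to eliminate the non-umbilic limit with $\bar\lambda_1\bar\lambda_2\neq0$, and use $\inf H^{2}=2S$ together with the global bound $H^{2}\le 2S$ to force $H$ constant in the umbilic case. The only cosmetic difference is that the paper closes the umbilic case by setting $\mathcal{L}H=0$ in \eqref{eq:16-13} to get $H(1-S)=-\lambda S$, hence $\lambda^{2}S=2(S-1)^{2}$, rather than passing through the round-sphere classification as you do; both yield the same relation.
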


\begin{proof}
We apply the generalized maximum principle for $\mathcal L$-operator
to the function $-H^2$. Thus, there exists a sequence $\{p_m\}$ in $M^2$ such that
\begin{equation*}
\lim_{m\rightarrow\infty} H^2(p_m)=\inf H^2=\bar H^2, \ \
\lim_{m\rightarrow\infty} |\nabla H^2(p_m)|=0, \ \
\liminf_{m\rightarrow\infty}\mathcal{L} H^2(p_m)\geq 0,
\end{equation*}
that is,
\begin{equation}\label{eq:6-22-1}
\begin{cases}
\begin{aligned}
&\lim_{m\rightarrow\infty} H^2(p_m)=\sup H^2=\bar H^2,\quad
\lim_{m\rightarrow\infty} |\nabla H^2(p_m)|=0,\\
&0\geq
\lim_{m\rightarrow\infty} |\nabla H|^2(p_m)+\bar H^2+S(\lambda-\bar H)\bar H.
\end{aligned}
\end{cases}
\end{equation}
\vskip2mm
\noindent
By taking the limit and making use of the same assertion as in Theorem 3.2, we can prove
$\inf H^2\neq 0$.
Hence, without loss of the generality, we can assume
$$
\lim_{m\rightarrow\infty}h_{ijl}(p_m)=\bar h_{ijl}, \quad \lim_{m\rightarrow\infty}h_{ij}(p_m)=\bar h_{ij}=\bar \lambda \delta_{ij},
\quad \lim_{m\rightarrow\infty}h_{ijkl}(p_m)=\bar h_{ijkl},
$$
for $i, j, k, l=1, 2$.
\noindent
If  $ \bar \lambda_1=0$ or $ \bar \lambda_2=0$, we have
 \begin{equation}
 \lambda\bar H=S-1, \ \ S=\dfrac{(S-1)^2}{\lambda^2} \ {\rm and} \ \inf H^2=S.
 \end{equation}
\noindent
If $ \bar \lambda_1\bar \lambda_2 \neq0$, by making use of the same assertion as in the proof of the
theorem 3.1, we get $ \bar \lambda_1=\bar \lambda_2$.  In this case,
\begin{equation}
\inf  H^2=(\bar \lambda_1+\bar \lambda_2)^2=2S.
\end{equation}
We have
$$
0\leq 2S-H^2\leq \sup (2S-H^2)=2S-\inf H^2=0.
$$
Namely,  we obtain $H$ is constant.
Hence, we conclude from \eqref{eq:16-13}
$$
H^2=2S \ {\rm and}  \ 2S(1-S)^2=H^2(1-S)^2=\lambda^2S^2.
$$
The proof of Theorem \ref{theorem 2} is finished.
\end{proof}

\vskip3mm
\noindent
{\it Proof of Theorem \ref{theorem 1}}.  If $\lambda=0$, $X: M^2\to \mathbb{R}^{3}$ is a complete self-shrinker. From
Theorem CO of Cheng and Ogata \cite{CO}, we know that our results hold. If $\lambda\neq 0$, from Theorem \ref{theorem 2}, we know that  $\lambda^2S=(S-1)^2$  or  $\lambda^2S=2(S-1)^2$. It is easy to check that  $\lambda^2S=(S-1)^2$  and  $\lambda^2S=2(S-1)^2$ do not  hold  simultaneously.   If $\lambda^2S=(S-1)^2$, we have $\inf H^2=S=\sup H^2$ from Theorem 3.1. Hence,   $H$ is constant. If
$\lambda^2S=(S-1)^2$, we have $\inf H^2=2S=\sup H^2$ from Theorem 3.1.  $H$ is also constant.  Thus, we conclude that $X: M^2\to \mathbb{R}^{3}$ is an isoparametric  surface.  By a classification theorem due to Lawson \cite{L}, $X: M^2\to \mathbb{R}^{3}$  is $S^k(r)\times\mathbb{R}^{2-k}$, $k=1,2$. By a direct calculation, we conclude
$X: M^2\to \mathbb{R}^{3}$ is either $S^1(\frac{-\lambda+\sqrt{\lambda^2+4}}{2})\times \mathbb{R}^{1}$, or  $S^2(\frac{-\lambda+\sqrt{\lambda^2+8}}{2})$.
\begin{flushright}
$\square$
\end{flushright}

\end{document}